\documentclass[a4paper,12pt]{amsart}
\usepackage{geometry}           
\usepackage{graphicx}	
\usepackage{amsthm}
\usepackage{amssymb}
\usepackage{amsfonts}
\usepackage{enumerate}
\usepackage{mathrsfs}
\usepackage[inline]{enumitem}
\usepackage{color}

\newtheorem{thm}{Theorem}[section]

\newtheorem{prop}[thm]{Proposition}
\newtheorem{lem}[thm]{Lemma}
\newtheorem{cor}[thm]{Corollary}
\newtheorem{examp}[thm]{Example}

\theoremstyle{definition}
\newtheorem{defn}[thm]{Definition}
\newtheorem{qu}[thm]{Question}
\newtheorem{rmk}[thm]{Remark}

\newcommand{\AU}{\mathrm{AU}}
\newcommand{\Z}{\mathrm{Z}}

\newcommand{\bN}{\mathbb{N}}
\newcommand{\bZ}{\mathbb{Z}}

\newcommand{\defbold}{\textbf}

\newcommand{\mc}{\mathcal}

\def\part{\mathcal{P}}
\def\qart{\mathcal{Q}}

\begin{document}

\title[Topologically simple t.d.l.c. matrix groups]{Topologically simple, totally disconnected, locally compact infinite matrix groups}

\author{P.\,Groenhout, C.\,D.~Reid and G.\,A.~Willis}
\address{The University of Newcastle, Callaghan 2308, N.S.W. Australia}
\thanks{This research was supported by A.R.C. Grant FL170100032}

\date{}

\maketitle

\begin{abstract}
Groups of almost upper triangular infinite matrices with entries indexed by integers are studied. It is shown that, when the matrices are over a finite field, these groups admit a nondiscrete totally disconnected, locally compact group topology and are topologically simple.
\end{abstract}

\section{Introduction}

The present article contributes to the theory of totally disconnected, locally compact (t.d.l.c.) groups by constructing examples of simple t.d.l.c. groups.  To explain their significance, we first recall some of the theoretical context.

A key aspect of the theory of locally compact groups is the relationship between the local structure of a group $G$, that is, the properties shared by all the neighbourhoods of the identity in $G$, and the global properties of $G$.  For connected locally compact groups, the connection is strong and well-understood: every such group is a pro-Lie group, and the structure of connected pro-Lie groups is controlled to a great extent by the associated pro-Lie algebra, which precisely captures the local structure (see \cite{HofmannMorris}).  In the complementary case of totally disconnected, locally compact (t.d.l.c.) groups, the local structure is given by Van Dantzig's theorem: there is a base of neighbourhoods of the identity consisting of compact open subgroups.  Since every open subgroup of a compact group has finite index, the local structure therefore consists of the properties of a compact open subgroup that are invariant on passage to a subgroup of finite index.  One can derive an exact analogue of Lie theory for analytic groups defined over totally disconnected locally compact fields such as $\mathbb{Q}_p$ and $\mathbb{F}_q(\!(t)\!)$.  However, the class of t.d.l.c.~groups is much larger than just the analytic case, and compared to (pro-)Lie groups, the general connection between local and global structure is less rigid and much less well-understood.  There remains a fundamental and largely unanswered question of which local structures are compatible with which global properties.  For example:

\begin{qu}\label{qu:local_one}Which compact groups can occur (up to finite index) as open subgroups of (topologically) simple groups?\end{qu}

The following is a local condition that ensures a strong interaction between the local and global structure: suppose that $G$ is nondiscrete, but the quasi-centre $\mathrm{QZ}(G)$, that is, the set of elements with open centraliser, is discrete. In this situation, the quotient $G/\mathrm{QZ}(G)$ is locally isomorphic to $G$ and has trivial quasi-centre, so one effectively reduces to the case where the quasi-centre is trivial.  Here we recall the framework introduced in \cite{CapDeMedts} and \cite{BEW}.  Given a group $G$ with trivial quasi-centre, the group of germs $\mathscr{L}(G)$ consists of all isomorphisms between open subgroups of $G$, modulo equality on an open set.  In this context, Question~\ref{qu:local_one} is subject to the following dichotomy: either there are no (topologically) simple groups locally isomorphic to $G$, or else the group of germs $\mathscr{L}(G)$ has the property that the subgroup $R := \mathrm{Res}(\mathscr{L}(G))$, defined as the intersection of open normal subgroups, is open and (topologically) simple.   In the latter case, $R$ is the unique largest (topologically) simple group of this local isomorphism type.  So for profinite groups with trivial quasi-centre, the previous question can be restated as follows:

\begin{qu}Given a profinite group $U$ with trivial quasi-centre, when is $R = \mathrm{Res}(\mathscr{L}(U))$ open and simple?\end{qu}

A more specific question is to give a local description of the class $\mathscr{S}$ of nondiscrete compactly generated topologically simple t.d.l.c.~groups $G$.  Such groups play an important role in the general theory \cite{BurgerMozes}, \cite{CapMon}, and significant progress has been made in recent years both in obtaining restrictions on the possible local structure \cite{CapReidWiII} and constructing new examples; for instance it was shown by Smith \cite{SMSmith} that there are $2^{\aleph_0}$ isomorphism classes in $\mathscr{S}$, but it is not known if there are uncountably many local isomorphism classes in $\mathscr{S}$.  The condition of compact generation imposes additional restrictions on the local structure.  For example, every group in $\mathscr{S}$ has trivial quasi-centre (\cite[Theorem~4.8]{BEW}) and no nontrivial abelian subgroup of a group in $\mathscr{S}$ has open normaliser (\cite[Theorem~A]{CapReidWiII}); neither of these statements is true if one drops the compact generation requirement.

Examples are known of nondiscrete topologically simple t.d.l.c. groups with dense quasi-centre, for example in~\cite{WSimp}.  Other examples of non-compactly generated simple groups have been obtained where the local structure is an iterated wreath product of finite groups, and where the quasi-centre is trivial (\cite[Lemma~6.9]{CapDeMedts}, \cite[Example~6.3(v)]{ReidJI}).  Such iterated wreath product constructions however have a local structure similar to that found in the known examples of groups in $\mathscr{S}$.  

In the present article, infinite-dimensional matrix groups over finite fields are constructed and shown to non-compactly generated and topologically simple.  Their topological simplicity is based on the simplicity of the finite matrix groups $\mbox{PSL}_n(\mathbb{F}_q)$ for~$n\geq2$.  Like the examples mentioned in the previous paragraph, the examples are direct limits of profinite groups.  The examples have trivial quasi-centre, so they fit into the framework of groups of germs, but nevertheless they have a fundamentally different local structure from groups in $\mathscr{S}$, because they have nontrivial abelian subgroups with open normaliser.

The topologically simple groups $\AU_{\Lambda}(\mathbb{F}_q)/\Z_{\Lambda}(\mathbb{F}_q)$ constructed in this article all have compact open subgroups $\mathrm{U}_{\Lambda}(\mathbb{F}_q)/\Z_{\Lambda}(\mathbb{F}_q)$ formed as upper-triangular matrices modulo scalar matrices, with respect to a preorder $\Lambda$ on the coordinates satisfying certain conditions.  The general theory then implies that there is a unique largest topologically simple group
\[
\mathrm{RU}_{\Lambda}(\mathbb{F}_q) := \mathrm{Res}(\mathscr{L}(\mathrm{U}_{\Lambda}(\mathbb{F}_q)/\Z_{\Lambda}(\mathbb{F}_q)))
\]
of the same local isomorphism type, which cannot be compactly generated.

\begin{qu}For which preorders $\Lambda$ and finite fields $\mathbb{F}_q$ is $\AU_{\Lambda}(\mathbb{F}_q)/\Z_{\Lambda}(\mathbb{F}_q) = \mathrm{RU}_{\Lambda}(\mathbb{F}_q)$?  When they are not equal, what is $\mathrm{RU}_{\Lambda}(\mathbb{F}_q)$?
\end{qu}

The article is structured as follows: The matrix groups are defined in \S2 and are indexed by preordered sets satisfying certain properties. In \S3 algebraic properties of these groups are developed and subgroups important for later results are identified. The topology on the groups is described in \S4. Topological simplicity of the groups is established in \S5 and is shown in \S6 that there are uncountably many different local isomorphism types of matrix groups of this kind.

\begin{rmk}
The groups studied here have been described previously. In the case when the pre-order is $\mathbb{N}$ with its usual order, they were introduced and their representation theory investigated by A. M. Vershik and A. Zelevinsky in the early 1980s and by S. V. Kerov and A. M. Vershik in the 1990s, see~\cite{Kerov-Vershik}. The current state of the representation theory is given in~\cite{Gorin-Kerov-Vershik}, which also motivates study of the group and gives a short history of its representation theory with many more references. (The group called $\AU_{\mathbb{N}}(\mathbb{F}_q)$ in the present paper is called $\mathbb{GLB}$ in~\cite{Gorin-Kerov-Vershik}.) Local compactness of $\mathbb{GLB}$ and Haar measure are important for the representation theory.

Parabolic subgroups and the commutator subgroup of $\mathbb{GLB}$, and of similarly defined groups over more general rings, are described in \cite{Gupta-Holub1, Gupta-Holub2, Holub}. It is noted in those papers that $\mathbb{GLB}$ is topologically simple modulo its centre. The novelty of the present paper is that we define uncountably many non-isomorphic infinite-matrix groups over each finite field and study their local structure. 

We are grateful to W.~Ho\l ubowski for drawing our attention to the previous work on these groups after the first version of our paper was posted on the arXiv.
\end{rmk}

\section{Definitions}
\label{sec:definitions}

The groups of interest are infinite matrix groups in which the matrices are indexed by the partially ordered set $(\mathbb{Z},\leq)$ or $(\mathbb{N},\leq)$. Since the construction works in greater generality, we begin by abstracting the properties which the indexing set needs to have for the arguments to go through. 
\begin{defn}
\label{defn:Z-like}
Fix a preordered set $(\Lambda,\lesssim)$, so that $\lesssim$ is reflexive and transitive. Then: `$i\lesssim j$ and $j\lesssim i$' is abbreviated to $i\sim j$; `$i\lesssim j$ and $i\not\sim j$' to $i\lnsim j$; and the interval notation $[i,j]$ refers to $\{k\in\Lambda \mid i\lesssim k\lesssim j\}$.  The subset $\Lambda'$ of $\Lambda$ is \defbold{convex} if, whenever $i,j \in \Lambda'$, then $[i,j] \subseteq \Lambda'$; and $\Lambda'$ is \defbold{strongly convex} if for all $i \in \Lambda \setminus \Lambda'$, either $i \lnsim j$ for all $j \in \Lambda'$, or else $i \gnsim j$ for all $j \in \Lambda'$.  Say that $(\Lambda,\lesssim)$ is \defbold{$\bZ$-like} if every finite subset of $\Lambda$ is contained in a finite strongly convex subset of $\Lambda$.
\end{defn}
\noindent Note that, as the name implies, every strongly convex set is convex.  If $(\Lambda,\lesssim)$ is $\bZ$-like, then $[i,i] = \{j\in\Lambda \mid j\sim i\}$, the convex hull of $\{i\}$, is contained in a strongly convex subset of~$\Lambda$ for every $i\in\Lambda$ and hence it is finite. In the cases when $\Lambda$ is $\mathbb{N}$, $-\mathbb{N}$ or $\mathbb{Z}$ with their usual ordering, a finite strongly convex subset is just an interval $[m,n]$ with $m<n$. The following observation will be useful.
\begin{lem}
\label{lem:str_conv}
The intersection of strongly convex sets is strongly convex. Hence every non-empty subset of $\Lambda$ is contained in a smallest strongly convex set.
\end{lem}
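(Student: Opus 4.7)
The plan is to verify the intersection statement directly from the definition and then deduce the existence of a smallest strongly convex set containing any nonempty subset by the standard intersection-of-closure-operators argument.

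For the first assertion, let $\{A_\alpha\}_{\alpha \in I}$ be a family of strongly convex subsets of $\Lambda$ and set $C := \bigcap_{\alpha \in I} A_\alpha$. I take an arbitrary $i \in \Lambda \setminus C$ and aim to show that either $i \lnsim j$ for all $j \in C$ or $i \gnsim j$ for all $j \in C$. Since $i \notin C$, there exists some index $\alpha_0$ with $i \notin A_{\alpha_0}$, and strong convexity of $A_{\alpha_0}$ then furnishes a dichotomy: either $i \lnsim j$ for every $j \in A_{\alpha_0}$, or $i \gnsim j$ for every $j \in A_{\alpha_0}$. Because $C \subseteq A_{\alpha_0}$, the same dichotomy holds with $A_{\alpha_0}$ replaced by $C$, which is exactly what is required.

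For the second assertion, given a nonempty subset $S \subseteq \Lambda$, I consider the family $\mathcal{F}_S$ of all strongly convex subsets of $\Lambda$ containing $S$. This family is nonempty because $\Lambda$ itself is strongly convex (vacuously, since $\Lambda \setminus \Lambda = \emptyset$). By the first part, $\bigcap \mathcal{F}_S$ is strongly convex; it obviously contains $S$ and is contained in every member of $\mathcal{F}_S$, so it is the smallest strongly convex set containing $S$.

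There is no real obstacle here; the one point that deserves a sentence of care is that when $i \in \Lambda \setminus C$ one must pick a specific $A_{\alpha_0}$ failing to contain $i$ before applying strong convexity, rather than trying to piece together information from all the $A_\alpha$ that happen to contain $i$. This is also where it matters that the dichotomy in the definition of strong convexity is a universal statement over $\Lambda'$ (so that it automatically restricts to the subset $C$), rather than some pointwise condition that might fail to survive intersection.
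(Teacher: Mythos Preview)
Your proof is correct and follows exactly the same approach as the paper: pick an index $\alpha_0$ with $i \notin A_{\alpha_0}$, invoke strong convexity there, and restrict the resulting dichotomy to the intersection. You add the explicit observation that $\Lambda$ itself is (vacuously) strongly convex to justify the ``hence'' clause, which the paper leaves implicit.
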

\begin{proof}
Let $\{\Lambda_\alpha\}$ be a set of strongly convex sets and suppose that $x\not\in \bigcap_\alpha \Lambda_\alpha$. Then there is $\alpha$ such that $x\not\in \Lambda_\alpha$ and so either $x\lnsim y$ for all $y\in \Lambda_\alpha$ or $y\lnsim x$ for all $y\in\Lambda_\alpha$. Hence either $x\lnsim y$ for all $y\in \bigcap_\alpha \Lambda_\alpha$ or $y\lnsim x$ for all $y\in\bigcap_\alpha \Lambda_\alpha$ and $\bigcap_\alpha \Lambda_\alpha$ is strongly convex.
\end{proof}

Other examples of $\bZ$-like partially ordered sets and a construction on such sets which will be used to construct examples of groups are described next.
\begin{prop}
\label{prop:Z-like}
\begin{enumerate}[label = (\roman*)]
\item Let $\{(\Lambda_n,\lesssim_n)\}$ be a set of preordered finite sets, indexed by $\mathbb{N}$, $-\mathbb{N}$ or $\mathbb{Z}$ and define, for $x,y\in \bigsqcup_n \Lambda_n$,
$$
i\lesssim j \mbox{ holds if } \begin{cases}
\mbox{ either }i\in \Lambda_m \mbox{ and } j\in \Lambda_n \mbox{ with }m<n, \\
\mbox{ or } i,j\in \Lambda_n\mbox{ for some }n\mbox{ and }i\lesssim_n j
\end{cases}.
$$
Then $(\bigsqcup_n \Lambda_n,\lesssim)$ is a $\mathbb{Z}$-like partially ordered set. The subsets $\Lambda_m\sqcup \dots \sqcup \Lambda_n$, with $m\leq n$, are strongly convex. 
\label{prop:Z-like1}
\item Suppose that $(\Lambda,\lesssim)$ is $\mathbb{Z}$-like and that $\part = \left\{ \mathfrak{p}_\alpha\right\}$ is a partition of~$\Lambda$ into finite convex subsets, all but finitely many of which are of the form $[i,i]$. Denote the equivalence relation corresponding to~$\part$ by $\sim_\part$. Let~$\lesssim^\part$ be the preorder on $\Lambda$ generated by $\lesssim$ and $\sim_\part$.  Then $(\Lambda,\lesssim^\part)$ is $\mathbb{Z}$-like. 

If all $\mathfrak{p}_\alpha\in\part$ are either equal to $[i,i]$ for some~$i$ or are strongly convex, then $\lesssim^\part = \lesssim \,\cup\, \part$, that is, 
$$
\left\{ (x,y)\in\Lambda \mid x\lesssim^\part y\right\} = \left\{ (x,y)\in\Lambda \mid x\lesssim y\right\}\cup \left\{(x,y)\in\Lambda \mid x\sim_\part y\right\}.
$$
\label{prop:Z-like2}
\end{enumerate}
\end{prop}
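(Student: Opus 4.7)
The plan is to handle the two parts in turn.

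For (i), after checking reflexivity and transitivity of the defined relation by case analysis on which $\Lambda_n$'s the elements lie in, I would verify directly that $\Lambda_m \sqcup \cdots \sqcup \Lambda_n$ is strongly convex: any outside element lies in some $\Lambda_p$ with $p<m$ or $p>n$, and the definition immediately places it strictly below or above the whole block. Any finite subset of $\bigsqcup_n \Lambda_n$ meets only finitely many $\Lambda_n$'s, each of which is finite, so it sits in such a finite strongly convex union, giving $\mathbb{Z}$-likeness.

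For the first assertion of (ii), let $B$ be the union of the (finitely many) blocks in $\part$ not of the form $[i,i]$; this is finite by hypothesis. Given a finite $F \subseteq \Lambda$, I use $\mathbb{Z}$-likeness of $(\Lambda, \lesssim)$ to find a finite strongly convex (under $\lesssim$) set $S \supseteq F \cup B$, and then claim $S$ is also strongly convex under $\lesssim^\part$. Fix $x \notin S$; by strong convexity under $\lesssim$ we may assume $x \lnsim y$ for every $y \in S$, so $x \lesssim^\part y$. Suppose for contradiction that there is a chain $y = w_0, w_1, \ldots, w_k = x$ with each consecutive pair related by $\lesssim$ or $\sim_\part$. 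First, any $\sim_\part$-step beginning in $S$ remains in $S$: the non-$[i,i]$ blocks lie in $B \subseteq S$, while a set strongly convex under $\lesssim$ is automatically closed under $\sim$ and hence contains each $[i,i]$-block that it meets. Therefore the chain must first exit $S$ via a $\lesssim$-step, and strong convexity of $S$ then forces the exit point to be strictly above $S$. The main work is an induction showing that every later $w_j$ remains strictly above $S$ under $\lesssim$; for a $\sim_\part$-step occurring outside $S$, the relevant block must be of $[i,i]$-type (otherwise it would lie in $S$), hence a full $\sim$-class, which preserves the ``above'' property. This gives $x = w_k$ strictly above $S$, contradicting $x \lnsim y$. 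This induction is the most delicate step of the whole argument.

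For the second assertion of (ii), under the stronger hypothesis on the blocks, I would prove that $R := \{(x,y) \mid x \lesssim y \text{ or } x \sim_\part y\}$ is itself transitive (reflexivity being automatic), so that $R$ is a preorder containing $\lesssim$ and $\sim_\part$ and hence equal to the generated preorder $\lesssim^\part$. Transitivity of $R$ splits into four cases by the types of the two steps; the only non-trivial cases are $\lesssim$ followed by $\sim_\part$ and its mirror. For $x \lesssim y \sim_\part z$ with $y, z$ in the block $\mathfrak{p}$: if $\mathfrak{p}$ is of $[j,j]$-type then $y \sim z$ gives $x \lesssim z$; if $\mathfrak{p}$ is strongly convex and $x \in \mathfrak{p}$ then $x \sim_\part z$; and if $\mathfrak{p}$ is strongly convex with $x \notin \mathfrak{p}$, then $x \lesssim y \in \mathfrak{p}$ combined with strong convexity forces $x$ strictly below $\mathfrak{p}$, yielding $x \lesssim z$.
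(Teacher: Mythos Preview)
Your proposal is correct and follows essentially the same route as the paper: for (i) both arguments verify directly that the blocks $\Lambda_m\sqcup\cdots\sqcup\Lambda_n$ are strongly convex and cover any finite set; for (ii) both enlarge the given finite set by the finitely many non-trivial parts of $\part$, pass to a finite strongly convex hull under $\lesssim$, and then check that this hull is still strongly convex under $\lesssim^\part$, while the second assertion is handled in both cases by showing $\lesssim\cup\sim_\part$ is already transitive via the same case split on mixed steps. The only difference is one of detail: where the paper passes in a single line from ``$i\lnsim j$ for all $j\in\Lambda'$ and $i\not\sim_\part j$ for all $j\in\Lambda'$'' to ``$i\lnsim^\part j$ for all $j\in\Lambda'$'', you unpack this via an explicit chain/induction argument tracking how a putative $\lesssim^\part$-chain could exit and re-enter $S$, which is a reasonable elaboration since $\lesssim^\part$ is defined as a generated preorder.
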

\begin{proof}
\ref{prop:Z-like1} That $\lesssim$ is reflexive and transitive is clear, and strong convexity of the subsets $\Lambda_m\sqcup \dots \sqcup \Lambda_n$ holds because $\mathbb{Z}$ is linearly ordered. Then $(\bigsqcup_n \Lambda_n,\lesssim)$ is $\mathbb{Z}$-like because every finite subset of $\bigsqcup_n \Lambda_n$ is contained in $\Lambda_m\sqcup \dots \sqcup \Lambda_n$ for some $m,n$.

\ref{prop:Z-like2} Let $\mathfrak{q}$ be a finite subset of $\Lambda$ and $\mathfrak{p}_1$, \dots, $\mathfrak{p}_n$ be the parts of $\part$ that are not of the form $[i,i]$. Then $\mathfrak{q}\cup\bigcup_{l=1}^n \mathfrak{p}_l$ is a finite subset of $\Lambda$ and so there a finite set $\Lambda'$ strongly convex in $(\Lambda, \lesssim)$ such that $\mathfrak{q}\cup\bigcup_{l=1}^n \mathfrak{p}_l\subseteq \Lambda'$. Suppose that $i\not\in\Lambda'$. Then either $i\lnsim j$ for all $j\in \Lambda'$ or $i\lnsim j$ for all $j\in\Lambda'$ because $(\Lambda,\lesssim)$ is $\mathbb{Z}$-like. Since $\mathfrak{p}_l\subset \Lambda'$ for all~$l\in\{1,\dots,n\}$, we also have that $i\not\sim_\part j$ for all $j\in \Lambda'$. Hence either $i\lnsim^\part j$ for all $j\in \Lambda'$ or $i\lnsim\part j$ for all $j\in\Lambda'$ and $\Lambda'$ is strongly convex. Since $\mathfrak{q}$ was arbitrary, if follows that $(\Lambda,\lesssim^\part)$ is $\mathbb{Z}$-like. 
 
Suppose that all parts of $\part$ not of the form $[i,i]$ are strongly convex in $(\Lambda,\lesssim)$. If $i\sim_\part j$ and $j\lesssim k$, then either $k \in \mathfrak{p}$ and $i \sim_\part k$, or $k \not\in \mathfrak{p}$ and $i\lesssim k$ because $j\lesssim k$ and $\mathfrak{p}$ is strongly convex. A similar argument holds if~$i\lesssim j$ and~$j\sim_\part k$. Therefore $\lesssim \,\cup\, \part$ is a transitive relation and is equal to the preorder it generates. 
\end{proof}

The following terms and notation will be used when Proposition~\ref{prop:Z-like} is applied.
\begin{defn}
\label{defn:finitary}
\begin{itemize}
\item A partition~$\part$ of $\Lambda$ is \emph{finitary} if the parts of~$\part$ are finite convex subsets and all but finitely many are the minimal intervals $[k,k]$ with $k\in\Lambda$.
\item Given a finite convex subset $\Lambda'\subset \Lambda$, the finitary partition
$$
\{\Lambda'\}\sqcup\left\{ [k,k] \subset \Lambda \mid k\in \Lambda\setminus\Lambda'\right\}
$$ 
will be denoted by $\part\Lambda'$.
\item The preordered set $(\Lambda,\lesssim^\part)$ defined in Proposition~\ref{prop:Z-like}\ref{prop:Z-like2} will be denoted by $\Lambda+\part$. When $\part$ is the partition $\part\Lambda'$, with $\Lambda'$ a finite convex subset of $\Lambda$, the preordered set will be simply written as $\Lambda+\Lambda'$.
\end{itemize}
\end{defn}
Note that, when $\Lambda'$ is a finite strongly convex subset of $\Lambda$, the preorder $\Lambda+\Lambda'$ agrees with that of $\Lambda$ except that all elements of $\Lambda'$ are equivalent.

Matrix operations and associated notation are defined next. The examples to be studied are matrix groups over a finite field, $\mathbb{F}_q$, but the definitions apply to any commutative ring~$R$. 
\begin{defn}
\label{defn:matrix_ops}
Fix a set $\Lambda$ and a commutative unital ring $R$; we write $R^*$ for the group of units of $R$.  A \defbold{$(\Lambda \times \Lambda)$-matrix} over $R$ is a tuple $(a_{ij})_{i,j \in \Lambda}$ such that $a_{ij} \in R$ for all $i,j \in \Lambda$.  Note that we can add any pair of $(\Lambda \times \Lambda)$-matrices entry by entry.  We define a partial operation of multiplication of $(\Lambda \times \Lambda)$-matrices: the product of $(a_{ij})$ and $(b_{ij})$ is given by $(c_{ij})$ where 
\begin{equation}
\label{eq:matrix_mult}
c_{ij} = \sum_{k \in \Lambda}a_{ik}b_{kj},
\end{equation}
subject to the requirement that the product is only defined if, for all $i,j \in \Lambda$, the sum defining $c_{ij}$ has only finitely many nonzero terms.  It then follows that matrix multiplication is associative, for the same reason as for finite-dimensional matrices.  Write $\mathrm{M}_{\Lambda}(R)$ for the set of $(\Lambda \times \Lambda)$-matrices equipped with the operation of addition (under which $\mathrm{M}_{\Lambda}(R)$ is an abelian group) and the partial operation of multiplication.  When $\Lambda$ is finite $\mathrm{M}_{\Lambda}(R)$ is a ring; we write $\mathrm{GL}_{\Lambda}(R)$ for the group of units of this ring.

Suppose for the rest of this definition that $\Lambda$ is a $\bZ$-like partially ordered set.  The $(\Lambda \times \Lambda)$-matrix $(a_{ij})$ is \defbold{nonsingular} if there is a finite convex $\Lambda'\subset\Lambda$ such that, for every finite convex subset $\Lambda''\supset \Lambda'$, the $(\Lambda'' \times \Lambda'')$-submatrix $(a_{ij})_{i,j\in\Lambda''}$ is invertible over $R$. The matrix is \defbold{$\Lambda$-diagonal} if $a_{ij} = 0$ whenever $i \not\sim j$, and \defbold{scalar} if $a_{ij} = 0$ whenever $i \ne j$ and $a_{ii}$ is constant as $i$ ranges over $\Lambda$.  We remark that any matrix can be multiplied on either side by a $\Lambda$-diagonal matrix, and moreover that every matrix commutes with the scalar matrices.  In particular, the matrix $I = (\delta_{ij})$ is an identity element for multiplication on $\mathrm{M}_{\Lambda}(R)$. Also observe that, for each $k\in\Lambda$, the restriction of the $\Lambda$-diagonal matrices to the $[k,k]\times[k,k]$ `block' $\{(i,j)\mid i,j\sim k\}$ produces a ring isomorphic to $\mathrm{M}_{[k,k]}(R)$. 
\end{defn}

Subsets of $\mathrm{M}_{\Lambda}(R)$ which will be seen to be groups under the matrix multiplication are defined and named next. 
\begin{defn}
\label{defn:sets_of_matrices}
The group of nonsingular $\Lambda$-diagonal matrices is denoted by $\Delta_{\Lambda}(R)$ and its subgroup of nonsingular scalar matrices by $\Z_{\Lambda}(R)$. 

Given a $\bZ$-like partially ordered set $\Lambda$, a $(\Lambda \times \Lambda)$-matrix $(a_{ij})$ is: \defbold{upper triangular} if $a_{ij} = 0$ whenever $i \not\lesssim j$; \defbold{strictly upper triangular} if $a_{ij} = 0$ whenever $i \gtrsim j$; and \defbold{almost upper triangular} if $a_{ij} = 0$ for all but finitely many pairs $i,j$ such that $i \not\lesssim j$. Write $\mathrm{U}_{\Lambda}(R)$ for the set of nonsingular upper triangular $(\Lambda \times \Lambda)$-matrices; $\mathrm{U}^*_{\Lambda}(R)$ for the set of matrices $(\delta_{ij}+a_{ij})$ with $(a_{ij})$ strictly upper triangular; and $\AU_{\Lambda}(R)$ for the set of nonsingular almost upper triangular $(\Lambda \times \Lambda)$-matrices. 
 \end{defn}

\begin{rmk}
\label{rem:upper_triangular}
Suppose that the $(\Lambda \times \Lambda)$-matrix $(a_{ij})$ is almost upper triangular. Then there is a finite, strongly convex~$\Lambda'\subseteq \Lambda$ such that $a_{ij}=0$ unless $i, j \in \Lambda'$ or $i\lesssim j$ and, recalling the notation of Definition~\ref{defn:finitary}, $(a_{ij})$ belongs to $\mathrm{U}_{\Lambda+\Lambda'}(R)$. Conversely, if $\part$ is a {finitary} partition of $\Lambda$, then $\mathrm{U}_{\Lambda+\part}(R)\leq \AU_{\Lambda}(R)$. It follows, therefore, that, if $\Lambda$ has more than one element, then
\begin{equation*}
\label{eq:direct_limit}
\mathrm{U}_{\Lambda}(R) \subset \AU_{\Lambda}(R) = \bigcup\left\{  \mathrm{U}_{\Lambda+\part}(R) \mid \part \mbox{ a finitary partition of }\Lambda\right\}.
\end{equation*}
Denote the set of finitary partitions of~$\Lambda$ by~$\mc{F}$ and order~$\mc{F}$ by reverse refinement, that is, $\part_1\leq\part_2$ if~$\part_1$ is a refinement of~$\part_2$. Then $\AU_{\Lambda}(R)$ is the direct limit 
\begin{equation}
\label{eq:direct_limit2}
\mathrm{U}_{\Lambda}(R) \subset \AU_{\Lambda}(R) = \lim_{\part\in\mc{F}} \mathrm{U}_{\Lambda+\part}(R).
\end{equation}  
\end{rmk}

\section{Intermediate Results}

To begin our study of these matrix groups, we show that they are closed under multiplication, and that products of such matrices can easily be understood in terms of products of finite submatrices.

\begin{lem}
\label{lem:upper_triangular:product}
Let $R$ be a commutative unital ring, let $\Lambda$ be a $\bZ$-like partially ordered set and suppose that $(a_{ij})$ and $(b_{ij})$ belong to $\mathrm{U}_\Lambda(R)$. 
\begin{enumerate}[label=(\roman*)]
\item The product $(c_{ij}) = (a_{ij})(b_{ij})$ is well-defined.
\label{lem:upper_triangular:product1}
\item The matrix $(c_{ij})$ belongs to $\mathrm{U}_\Lambda(R)$.
\label{lem:upper_triangular:product2}
\item For every convex subset, $\Lambda'$, of $\Lambda$, we have
\[
(c_{ij})_{i,j \in \Lambda'} = (a_{ij})_{i,j \in \Lambda'}(b_{ij})_{i,j \in \Lambda'}.
\]
\label{lem:upper_triangular:product3}
\end{enumerate}
\end{lem}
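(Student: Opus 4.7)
The plan is to prove parts (i) and (iii) essentially in one step by observing the support of the sum defining $c_{ij}$, and then to derive part (ii) from nonsingularity of the factors together with (iii).

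For (i) and (iii), I would start from the observation that upper triangularity forces $a_{ik}b_{kj}=0$ unless $i\lesssim k$ and $k\lesssim j$, i.e.\ $k\in[i,j]$. If $i\not\lesssim j$, then $[i,j]$ is empty (if some $k$ satisfied $i\lesssim k\lesssim j$, transitivity would give $i\lesssim j$), so $c_{ij}=0$ and the sum is vacuously finite. If $i\lesssim j$, apply the $\bZ$-like property to the finite set $\{i,j\}$ to obtain a finite strongly convex set $\Lambda_0$ containing it; then $[i,j]\subseteq\Lambda_0$ is finite, giving (i) and also the first half of (ii) (upper triangularity of $(c_{ij})$). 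For (iii), fix a convex $\Lambda'\subseteq\Lambda$ and $i,j\in\Lambda'$. Any $k$ contributing a nonzero term to $c_{ij}$ lies in $[i,j]$, and by convexity of $\Lambda'$ we have $[i,j]\subseteq\Lambda'$. Thus the sum over $\Lambda$ and the sum over $\Lambda'$ agree, which is exactly the statement that the $\Lambda'$-submatrix of the product equals the product of the $\Lambda'$-submatrices.

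For the remaining half of (ii), namely nonsingularity of $(c_{ij})$, I would combine the finite convex witnesses for $(a_{ij})$ and $(b_{ij})$. Pick finite convex sets $\Lambda_a$ and $\Lambda_b$ witnessing nonsingularity of $(a_{ij})$ and $(b_{ij})$ respectively. Their union $\Lambda_a\cup\Lambda_b$ is a finite subset of $\Lambda$, so by $\bZ$-likeness it lies in some finite strongly convex (hence convex) set $\Lambda_0$. For any finite convex $\Lambda''\supseteq\Lambda_0$, both $(a_{ij})_{\Lambda''\times\Lambda''}$ and $(b_{ij})_{\Lambda''\times\Lambda''}$ are invertible by the choice of $\Lambda_a,\Lambda_b$. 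By part (iii), the submatrix $(c_{ij})_{\Lambda''\times\Lambda''}$ equals the product of these two invertible finite matrices, hence is itself invertible. Therefore $\Lambda_0$ witnesses nonsingularity of $(c_{ij})$.

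The only real subtlety is choosing the right common witness $\Lambda_0$ in the nonsingularity argument: the individual witnesses $\Lambda_a,\Lambda_b$ are merely convex, and their union need not be convex, so one cannot just take $\Lambda_a\cup\Lambda_b$ directly. The $\bZ$-like axiom is exactly what lets one enlarge any finite set to a finite strongly convex set, which is in particular finite convex, and this is the feature of the indexing set that makes the whole argument work. Everything else is a straightforward consequence of the support computation for the product.
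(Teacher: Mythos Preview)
Your proof is correct and follows essentially the same approach as the paper: both arguments rest on the observation that a nonzero term $a_{ik}b_{kj}$ forces $k\in[i,j]$, which is finite and contained in any convex set containing $i$ and $j$. If anything, your treatment of (ii) is more careful than the paper's, which simply asserts that upper triangularity establishes (ii) without separately addressing nonsingularity; your explicit step of enlarging $\Lambda_a\cup\Lambda_b$ to a finite strongly convex $\Lambda_0$ and then invoking (iii) fills that gap cleanly.
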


\begin{proof}
Suppose $i,j,k \in \Lambda$ are such that $a_{ik}b_{kj} \neq 0$. Then $a_{ik}, b_{kj} \neq 0$ and so $i \lesssim k$ and $k \lesssim j$. Hence $i\lesssim j$, because~$\lesssim$ is transitive, and $k \in [i,j]$. Since $\Lambda$ is $\mathbb{Z}$-like, $[i,j]$ is finite, and we see that the sum
\begin{equation}
\label{eq:sum}
c_{ij} := \sum_{k \in \Lambda}a_{ik}b_{kj}
\end{equation}
is well-defined for all $i,j \in \Lambda$ and is zero unless $i \lesssim j$. This proves~\ref{lem:upper_triangular:product1} and~\ref{lem:upper_triangular:product2}.

What is more, let $\Lambda'$ be a convex subset of $\Lambda$ and $i,j \in \Lambda'$. Then all nonzero terms of the sum on the right hand side of (\ref{eq:sum}) arise from $k\in [i,j]\subseteq \Lambda'$, proving~\ref{lem:upper_triangular:product3}.
\end{proof}

Invertible elements of $\mathrm{M}_{\Lambda}(R)$ have a simple characterization. 
\begin{lem}\label{lem:upper_triangular}
Let $R$ be a commutative unital ring, let $\lesssim$ be a preorder on a set $\Lambda$ such that $(\Lambda,\lesssim)$ is $\mathbb{Z}$-like, and let $(a_{ij})$ be an upper triangular $(\Lambda\times\Lambda)$-matrix over~$R$.  Then $(a_{ij})$ is nonsingular if and only if the finite matrix $(a_{ij})_{i,j\in[k,k]}$ is nonsingular for all $k\in \Lambda$. 

If $(a_{ij})$ is nonsingular, then it is invertible in $\mathrm{M}_{\Lambda}(R)$. Hence $\mathrm{U}_{\Lambda}(R)$ is a group.
\end{lem}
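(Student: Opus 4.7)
The plan is to leverage a block upper triangular structure on every finite convex submatrix of $(a_{ij})$, using this both to prove the biconditional and to construct the inverse in $\mathrm{M}_{\Lambda}(R)$.

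The key preliminary observation is that every finite convex subset $\Lambda''\subseteq\Lambda$ is automatically a union of equivalence classes of $\sim$: for any $i\in\Lambda''$ one has $[i,i]\subseteq\Lambda''$ by convexity applied to the pair $i,i$. Choosing any linear extension of the induced partial order on the finite collection of equivalence classes $\{[k,k]:k\in\Lambda''\}$ and listing the elements of $\Lambda''$ block by block accordingly, the upper triangularity condition $a_{ij}=0$ for $i\not\lesssim j$ forces the submatrix $(a_{ij})_{i,j\in\Lambda''}$ into block upper triangular form, with diagonal blocks exactly the finite matrices $(a_{ij})_{i,j\in[k,k]}$. Such a matrix is invertible over $R$ precisely when each of its diagonal blocks is invertible. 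The forward direction of the biconditional then follows by applying the $\bZ$-like hypothesis: given a convex witness $\Lambda_0$ of nonsingularity and any $k\in\Lambda$, pick a finite strongly convex $\Lambda''\supseteq\Lambda_0\cup[k,k]$, and invertibility of the submatrix on $\Lambda''$ forces invertibility of its diagonal block on $[k,k]$. The converse direction is immediate from the same block observation: if every $[k,k]$-block is invertible, so is every finite convex submatrix, and $(a_{ij})$ is nonsingular with $\Lambda_0=\emptyset$.

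For the construction of the inverse, define $(b_{ij})\in\mathrm{M}_{\Lambda}(R)$ by $b_{ij}=0$ when $i\not\lesssim j$, and otherwise $b_{ij}=\bigl((a_{kl})_{k,l\in[i,j]}^{-1}\bigr)_{ij}$, the $(i,j)$-entry of the inverse of the finite matrix $(a_{kl})_{k,l\in[i,j]}$; this is well-defined since $[i,j]$ is finite by the $\bZ$-like hypothesis and the restricted matrix is invertible by the biconditional just proved. The main obstacle is showing consistency: for $k,l\in[i,j]$ with $k\lesssim l$, the entry $b_{kl}$ must equal $\bigl((a_{pq})_{p,q\in[i,j]}^{-1}\bigr)_{kl}$ as well. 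I expect this to drop out of the back-substitution formula for inverting block upper triangular matrices, since the $(k,l)$-entry of such an inverse is determined only by the blocks lying between $[k,k]$ and $[l,l]$ in the linear extension, and these all sit inside $[k,l]\subseteq[i,j]$ by convexity. Once consistency is secured, Lemma~\ref{lem:upper_triangular:product}\ref{lem:upper_triangular:product3} collapses the $(i,j)$-entry of the product $(a_{ij})(b_{ij})$ for $i\lesssim j$ to $\bigl((a_{kl})_{k,l\in[i,j]}\cdot(a_{kl})_{k,l\in[i,j]}^{-1}\bigr)_{ij}=\delta_{ij}$, while both entries vanish when $i\not\lesssim j$ from the support conditions on $(a_{ij})$ and $(b_{ij})$; a symmetric calculation yields $(b_{ij})(a_{ij})=I$. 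Finally $(b_{ij})\in\mathrm{U}_{\Lambda}(R)$, since it is upper triangular by construction and its $[k,k]$-blocks are inverses of those of $(a_{ij})$ and hence invertible; combined with Lemma~\ref{lem:upper_triangular:product}, this establishes that $\mathrm{U}_{\Lambda}(R)$ is a group.
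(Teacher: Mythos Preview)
Your argument is correct and takes a genuinely different route from the paper. The paper normalises by the block-diagonal part to reduce to a unipotent matrix $I - s$ with $s$ strictly upper triangular, and then inverts via the locally finite geometric series $\sum_{n \ge 0} s^n$. You instead define the inverse entrywise by patching together inverses of finite convex submatrices, relying on consistency across nested intervals. Your approach makes the role of Lemma~\ref{lem:upper_triangular:product}\ref{lem:upper_triangular:product3} more central and avoids the series manipulation; the paper's approach is more explicit and, as a byproduct, isolates the semidirect decomposition $\mathrm{U}_{\Lambda}(R) = \mathrm{U}^*_{\Lambda}(R) \rtimes \Delta_{\Lambda}(R)$ that is exploited in the next lemma.

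One point deserves tightening. Your justification for consistency---that the blocks lying between $[k,k]$ and $[l,l]$ in the linear extension ``all sit inside $[k,l]$ by convexity''---is not literally true: a class $[m,m]$ with $m$ incomparable to $k$ (or to $l$) may well be placed between them in a linear extension of the induced partial order on $[i,j]/{\sim}$, yet $m \notin [k,l]$. What \emph{is} true is that such incomparable blocks contribute nothing to the $(k,l)$-entry of the inverse, since the relevant off-diagonal blocks of $(a_{pq})$ vanish by upper triangularity. The cleanest fix is to bypass back-substitution altogether and reuse Lemma~\ref{lem:upper_triangular:product}\ref{lem:upper_triangular:product3}, now applied with the finite $\bZ$-like set $[i,j]$ in place of $\Lambda$ and its convex subset $[k,l]$ in place of $\Lambda'$: restriction to a convex subset is a homomorphism, hence commutes with inverses, giving
\[
\bigl((a_{pq})_{p,q\in[i,j]}\bigr)^{-1}\big|_{[k,l]} \;=\; \bigl((a_{pq})_{p,q\in[k,l]}\bigr)^{-1}
\]
directly. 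With that adjustment your proof goes through.
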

\begin{proof}
Let us note first that the conclusions are clear when $(a_{ij})$ is a $\Lambda$-diagonal matrix.  Indeed, when $(a_{ij})$ is $\Lambda$-diagonal its inverse matrix (if it exists) is the $\Lambda$-diagonal matrix~$(d_{ij})$ such that $(d_{ij})_{i,j\in[k,k]} := ((a_{ij})_{i,j\in[k,k]})^{-1}$ for each~$k$.

For the general case, if $(a_{ij})_{i,j\in[k,k]}$ is not invertible for some~$k\in\Lambda$, then $(a_{ij})_{i,j\in\Lambda'}$ is not invertible for every $\Lambda'\supseteq [k,k]$ and $(a_{ij})$ is not nonsingular. Suppose then that $(a_{ij})_{i,j\in[k,k]}$ is invertible for all~$k$. Then the $\Lambda$-diagonal matrix $(d_{ij})$ such that $(d_{ij})_{i,j\in[k,k]} := ((a_{ij})_{i,j\in[k,k]})^{-1}$ is invertible and the product $(d_{ij})(a_{ij}) =: (c_{ij})$ is defined, by Proposition~\ref{lem:upper_triangular:product}. Calculation shows that $(c_{ij}) = (\delta_{ij}-s_{ij})$ where $(s_{ij})$ is strictly upper-triangular. For any given $i,j \in \Lambda'$, the $(i,j)$-entry of $(s_{ij})^n$ is zero for all but finitely many $n \in \bN$: specifically, the $(i,j)$-entry is given by the sum
\[
\sum_{k_1,\dots,k_n \in \Lambda'}s_{ik_1}s_{k_1k_2} \dots s_{k_{n-1}k_n}s_{k_nj};
\]
for a nonzero term we must have $i \lnsim k_1 \lnsim \dots \lnsim k_n \lnsim j$, which in particular implies that $n < |[i,j]|$.  Thus the infinite sum
\begin{equation}
\label{eq:power}
(r_{ij}) := \sum_{n \ge 0} (s_{ij})^n
\end{equation}
is a well-defined matrix; it is clearly also upper triangular. This matrix satisfies $(r_{ij})(c_{ij}) = (\delta_{ij}) = (c_{ij})(r_{ij})$, that is, $(r_{ij}) = (\delta_{ij}-s_{ij})^{-1}$. Hence the product $(r_{ij})(d_{ij})$ is also a well-defined upper triangular matrix, and we have
\[
(r_{ij})(d_{ij})(a_{ij}) = (a_{ij})(r_{ij})(d_{ij}) = (\delta_{ij}).
\]
Thus $(r_{ij})(d_{ij})$ is an inverse for $(a_{ij})$. Hence $(a_{ij})$ is invertible and $\mathrm{U}_{\Lambda}(R)$ is a group.
\end{proof}

The argument of Lemma~\ref{lem:upper_triangular} may be taken further, as follows.
\begin{lem}
\label{lem:groups}
Let $R$ be a commutative unital ring and $\Lambda$ be a $\bZ$-like preordered set. Then $\mathrm{U}^*_{\Lambda}(R)$ and $\Delta_{\Lambda}(R)$ are subgroups of $\mathrm{U}_{\Lambda}(R)$, with $\mathrm{U}^*_{\Lambda}(R)$ normal, and  $\mathrm{U}_{\Lambda}(R)$ decomposes as
\[
\mathrm{U}_{\Lambda}(R) = \mathrm{U}^*_{\Lambda}(R) \rtimes  \Delta_{\Lambda}(R).
\]
Furthermore, 
$$
\Delta_{\Lambda}(R)\cong \prod_{[k,k],\, k\in\Lambda} \mathrm{GL}_{n(k)}(R)
$$ 
with $n(k) = |[k,k]|$.
\end{lem}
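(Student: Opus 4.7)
The plan is to unify all three assertions by constructing a group homomorphism $\pi \colon \mathrm{U}_{\Lambda}(R) \to \Delta_{\Lambda}(R)$ with kernel $\mathrm{U}^*_{\Lambda}(R)$, split by the inclusion $\Delta_{\Lambda}(R) \hookrightarrow \mathrm{U}_{\Lambda}(R)$. For $u = (a_{ij}) \in \mathrm{U}_\Lambda(R)$, set $\pi(u)$ to be the $\Lambda$-diagonal matrix whose $(i,j)$-entry equals $a_{ij}$ when $i \sim j$ and $0$ otherwise.

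First I would dispose of $\Delta_\Lambda(R)$. The remark at the end of Definition~\ref{defn:matrix_ops} identifies the ring of $\Lambda$-diagonal matrices with the direct product of the block rings $\mathrm{M}_{[k,k]}(R)$, one per $\sim$-class, and Lemma~\ref{lem:upper_triangular} specialised to diagonal matrices says that such a matrix is nonsingular precisely when every block is invertible over $R$. Passing to units simultaneously gives that $\Delta_{\Lambda}(R)$ is a group and the isomorphism $\Delta_{\Lambda}(R) \cong \prod_{k} \mathrm{GL}_{n(k)}(R)$; moreover every $\Lambda$-diagonal matrix is upper triangular (its nonzero entries satisfy $i \sim j$, hence $i \lesssim j$), so $\Delta_{\Lambda}(R) \leq \mathrm{U}_{\Lambda}(R)$.

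Next I would verify that $\pi$ is a well-defined homomorphism into $\Delta_{\Lambda}(R)$ with kernel $\mathrm{U}^*_{\Lambda}(R)$. Each $\sim$-class $[k,k]$ is convex in $\Lambda$: if $i,j \in [k,k]$ and $i \lesssim l \lesssim j$, then $k \lesssim i \lesssim l \lesssim j \lesssim k$ forces $l \sim k$. Hence Lemma~\ref{lem:upper_triangular:product}\ref{lem:upper_triangular:product3} yields $(uv)|_{[k,k]} = u|_{[k,k]} \cdot v|_{[k,k]}$ for $u,v \in \mathrm{U}_{\Lambda}(R)$, and reassembling these restrictions block-by-block gives $\pi(uv) = \pi(u)\pi(v)$. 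Lemma~\ref{lem:upper_triangular} makes each $u|_{[k,k]}$ invertible, so $\pi(u)$ is indeed nonsingular and belongs to $\Delta_\Lambda(R)$. An element $u \in \mathrm{U}_{\Lambda}(R)$ lies in $\ker(\pi)$ iff $u|_{[k,k]} = I_{[k,k]}$ for every $k$, which combined with upper triangularity is exactly the defining condition for $\mathrm{U}^*_{\Lambda}(R)$. Consequently $\mathrm{U}^*_{\Lambda}(R) = \ker(\pi)$ is a normal subgroup.

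Finally, the inclusion $\Delta_{\Lambda}(R) \hookrightarrow \mathrm{U}_{\Lambda}(R)$ is a right inverse of $\pi$, so the standard splitting criterion delivers $\mathrm{U}_{\Lambda}(R) = \mathrm{U}^*_{\Lambda}(R) \rtimes \Delta_{\Lambda}(R)$. The only mildly delicate point is the multiplicativity of $\pi$, and this reduces, via convexity of each $[k,k]$ and Lemma~\ref{lem:upper_triangular:product}\ref{lem:upper_triangular:product3}, to finite matrix multiplication within each block; no further ingredient is needed.
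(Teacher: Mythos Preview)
Your proof is correct. The ingredients are the same as the paper's---Lemma~\ref{lem:upper_triangular} for invertibility of the diagonal blocks and Lemma~\ref{lem:upper_triangular:product}\ref{lem:upper_triangular:product3} applied to the convex sets $[k,k]$---but the packaging differs. The paper argues in pieces: it pulls the explicit factorisation $(a_{ij}) = (d_{ij})^{-1}(\delta_{ij}-s_{ij})$ out of the proof of Lemma~\ref{lem:upper_triangular} to get $\mathrm{U}_{\Lambda}(R) = \Delta_{\Lambda}(R)\,\mathrm{U}^*_{\Lambda}(R)$, checks $\mathrm{U}^*_{\Lambda}(R)\cap\Delta_{\Lambda}(R)=\{1\}$ by hand, and then verifies normality of $\mathrm{U}^*_{\Lambda}(R)$ by a direct conjugation computation using Lemma~\ref{lem:upper_triangular:product}\ref{lem:upper_triangular:product3} on each block. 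Your route is tidier: once $\pi$ is seen to be a homomorphism (the same application of Lemma~\ref{lem:upper_triangular:product}\ref{lem:upper_triangular:product3}), normality of $\mathrm{U}^*_{\Lambda}(R)=\ker\pi$ and the trivial intersection come for free, and the factorisation follows from the section rather than from reaching back into the proof of Lemma~\ref{lem:upper_triangular}. One small point worth making explicit is that $\mathrm{U}^*_{\Lambda}(R)\subseteq \mathrm{U}_{\Lambda}(R)$ (needed before you can compare it with $\ker\pi$); this is immediate since each $[k,k]$-block of an element of $\mathrm{U}^*_{\Lambda}(R)$ is the identity, hence invertible, so Lemma~\ref{lem:upper_triangular} applies.
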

\begin{proof}
It was already observed at the beginning of the proof of Lemma~\ref{lem:upper_triangular} that $\Delta_{\Lambda}(R)$ is a subgroup of $\mathrm{U}_{\Lambda}(R)$. Furthermore, the calculation in that proof shows that each $(a_{ij})\in \mathrm{U}_{\Lambda}(R)$ satisfies $(a_{ij}) = (d_{ij})^{-1}(\delta_{ij} - s_{ij})$ and that every matrix $(\delta_{ij} - s_{ij})$ with $(s_{ij})$ strictly upper triangular is invertible with inverse given by~\eqref{eq:power}. Hence $\mathrm{U}^*_{\Lambda}(R)$ is a subgroup of $\mathrm{U}_{\Lambda}(R)$ too, and $\mathrm{U}_{\Lambda}(R) = \Delta_{\Lambda}(R)\mathrm{U}^*_{\Lambda}(R)$. 

That $\mathrm{U}^*_{\Lambda}(R)\cap \Delta_{\Lambda}(R)$ is trivial is clear, and so for the first claim it remains only to show that $\mathrm{U}^*_{\Lambda}(R)$ is a normal subgroup. For this, consider $(a_{ij}) \in \mathrm{U}_{\Lambda}(R)$ and $(\delta_{ij} - s_{ij})\in A \in \mathrm{U}^*_{\Lambda}(R)$. Lemma~\ref{lem:upper_triangular:product}\ref{lem:upper_triangular:product3} implies that, for every $k\in\Lambda$, 
$$
((a_{ij})(\delta_{ij} - s_{ij}))_{i,j\in[k,k]} = (a_{ij})_{i,j\in[k,k]},
$$  
and it follows, again by Lemma~\ref{lem:upper_triangular:product}\ref{lem:upper_triangular:product3}, that 
$$
((a_{ij})(\delta_{ij} - s_{ij})(a_{ij})^{-1})_{i,j\in[k,k]} = (\delta_{ij})_{i,j\in[k,k]}.
$$ 
Hence $\mathrm{U}^*_{\Lambda}(R)$ is normal.

For the second claim, observe that the minimal convex set $[k,k]$ is finite for every $k\in\Lambda$ and that, if an invertible matrix $(a_{ij})_{i,j\in[k,k]}$ is chosen for every such $[k,k]$, then the $(\Lambda\times\Lambda)$-matrix $(b_{ij})$ with
$$
b_{ij} = \begin{cases}
a_{ij}, & \text{ if }i,j\in[k,k]\text{ for some }k\in\Lambda\\
0, & \text{ if } i\lnsim j\text{ or }j\lnsim i
\end{cases}
$$
belongs to $\Delta_{\Lambda}(R)$. Lemma~\ref{lem:upper_triangular} shows that every element of $\Delta_{\Lambda}(R)$ has this form and the claimed isomorphism follows. 
\end{proof}

The following observation, derived from Lemma~\ref{lem:upper_triangular:product}\ref{lem:upper_triangular:product3} and Lemma~\ref{lem:upper_triangular}, will be useful later.

\begin{rmk}
\label{rem:upper_triangular:product}
For every convex subset $\Lambda'$, the restriction of the identity matrix in $\mathrm{M}_{\Lambda}(R)$ to $\Lambda'$ is the identity matrix in $\mathrm{M}_{\Lambda'}(R)$. Hence  Lemma~\ref{lem:upper_triangular:product}\ref{lem:upper_triangular:product3} implies that the restriction map $\theta_{\Lambda'} : \mathrm{U}_\Lambda(R) \to \mathrm{U}_{\Lambda'}(R)$ defined by $\theta_{\Lambda'}((a_{ij})_{i,j \in \Lambda}) = (a_{ij})_{i,j \in \Lambda'}$  is a group homomorphism. If $(a_{ij})_{i,j \in \Lambda'}\in\mathrm{GL}_{\Lambda'}(R)$, then the matrix in $\mathrm{M}_{\Lambda}(R)$ which agrees with $a_{ij}$ when $i,j\in\Lambda'$ and equals $\delta_{ij}$ otherwise belongs to $\mathrm{U}_{\Lambda+\Lambda'}(R)$. Hence $\theta_{\Lambda'} : \mathrm{U}_{\Lambda+\Lambda'}(R) \to \mathrm{GL}_{\Lambda'}(R)$ is a surjection.

\end{rmk}

Let $\mc{C}$ be the set of finite convex subsets of $\Lambda$. Then combining the homomorphisms, $\theta_{\Lambda'}$, of Remark~\ref{rem:upper_triangular:product} and noting that $\mathrm{U}_{\Lambda'}(R)\leq \mathrm{GL}_{\Lambda'}(R)$ yields an injective group homomorphism
\begin{equation}
\label{eq:product_map1}
\theta : \mathrm{U}_{\Lambda}(R) \rightarrow \prod_{\Lambda' \in \mc{C}} \mathrm{GL}_{\Lambda'}(R); \; (a_{ij}) \mapsto \left((a_{ij})_{i,j \in \Lambda'}\right)_{\Lambda'\in\mc{C}}.
\end{equation}

To finish this section, we remark that the centre of $\AU_{\Lambda}(R)$ is now easily obtained from the centre of the general linear group.

\begin{lem}\label{lem:centre}
Let $R$ be a commutative unital ring and let $\Lambda$ be a $\bZ$-like partially ordered set.  Then $\Z(\AU_{\Lambda}(R)) = \Z_{\Lambda}(R)$.
\end{lem}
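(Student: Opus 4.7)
The inclusion $\Z_{\Lambda}(R) \subseteq \Z(\AU_{\Lambda}(R))$ is immediate from the observation in Definition~\ref{defn:matrix_ops} that every scalar matrix commutes with every matrix in $\mathrm{M}_{\Lambda}(R)$, so the work lies entirely in the reverse inclusion. The case $|\Lambda| = 1$ is trivial (there $\AU_{\Lambda}(R) = R^* = \Z_{\Lambda}(R)$), so I assume $|\Lambda| \geq 2$ and fix $A = (a_{k\ell}) \in \Z(\AU_{\Lambda}(R))$; the aim is to show that $A$ is a scalar matrix.

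My plan is to commute $A$ against the family of elementary matrices $e_{ij} := I + E_{ij}$, where $E_{ij}$ denotes the $(\Lambda \times \Lambda)$-matrix with a $1$ in position $(i,j)$ and zeros elsewhere, as $(i,j)$ ranges over all pairs of distinct elements of $\Lambda$. The first step is to verify that $e_{ij} \in \AU_{\Lambda}(R)$ for every such pair, including when $i \not\lesssim j$: almost upper triangularity is clear since at most one off-diagonal entry of $e_{ij}$ is nonzero; and nonsingularity follows because, by $\bZ$-likeness, $\{i,j\}$ lies in some finite strongly convex $\Lambda'$, and on every finite convex $\Lambda'' \supseteq \Lambda'$ the submatrix of $e_{ij}$ has $1$s on the diagonal and a single off-diagonal $1$, hence determinant $1$. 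In fact, in the notation of Remark~\ref{rem:upper_triangular}, one has $e_{ij} \in \mathrm{U}_{\Lambda+\Lambda'}(R)$.

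The second step is the calculation: expanding $A e_{ij} = e_{ij} A$ gives $A E_{ij} = E_{ij} A$, and comparing the $(k,\ell)$-entries forces $a_{ki} = 0$ for all $k \neq i$, $a_{j\ell} = 0$ for all $\ell \neq j$, and $a_{ii} = a_{jj}$. Applying this for every pair $i \neq j$, all off-diagonal entries of $A$ vanish and the diagonal entries share a common value $\lambda \in R$, so $A = \lambda I$; since $A$ is nonsingular one has $\lambda \in R^*$, so $A \in \Z_{\Lambda}(R)$. The main obstacle is the verification that $e_{ij} \in \AU_{\Lambda}(R)$ for \emph{every} pair of distinct indices---this is where $\bZ$-likeness is genuinely used, to handle pairs $i,j$ that are incomparable under $\lesssim$---whereas the commutation calculation itself is routine.
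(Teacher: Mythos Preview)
Your proof is correct. The paper's argument packages the same idea differently: instead of commuting $A$ directly against elementary matrices inside $\AU_\Lambda(R)$, it invokes the surjective restriction homomorphism $\theta_{\Lambda'}\colon \mathrm{U}_{\Lambda+\Lambda'}(R)\to\mathrm{GL}_{\Lambda'}(R)$ of Remark~\ref{rem:upper_triangular:product} to conclude that $(a_{ij})_{i,j\in\Lambda'}$ lies in the centre of $\mathrm{GL}_{\Lambda'}(R)$ for every sufficiently large finite convex $\Lambda'$, and then appeals to the classical fact that $\Z(\mathrm{GL}_n(R))$ consists of scalar matrices. Your argument is really the same computation carried out inside $\AU_\Lambda(R)$ itself---the $e_{ij}$ you use are precisely lifts of the witnesses in the finite-dimensional centre calculation. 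What you gain is self-containment and an explicit identification of where $\bZ$-likeness is needed (to place $e_{ij}$ in $\AU_\Lambda(R)$ even when $i,j$ are incomparable); what the paper gains is brevity by reusing the restriction machinery already developed.
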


\begin{proof}
It is clear that $\Z_{\Lambda}(R)$ is a central subgroup of $\AU_{\Lambda}(R)$.  Conversely, given a central element $(a_{ij})$ of $\AU_{\Lambda}(R)$, then we see from Lemma~\ref{lem:groups} that $(a_{ij})_{i,j \in \Lambda'}$ is central in $\mathrm{GL}_{\Lambda'}(R)$ for all finite convex subsets $\Lambda'$ of $\Lambda$ such that $(a_{ij}) \in \mathrm{U}_{\Lambda+\Lambda'}(R)$. Hence $(a_{ij})_{i,j \in \Lambda'}$ is a scalar diagonal matrix.  Given the freedom of choice of $\Lambda'$, we conclude that $(a_{ij})_{i,j \in \Lambda}$ is a scalar diagonal matrix, that is, $(a_{ij}) \in \Z_{\Lambda}(R)$.
\end{proof}
\section{Topology}

Suppose that $R$ is a topological ring. Equip $\mathrm{M}_{\Lambda}(R)$ with the product topology and $\mathrm{U}_{\Lambda}(R)$ with the subspace topology. Direct products of topological groups, such as in Equation~\eqref{eq:product_map1}, are also equipped with the product topology.

\begin{lem}
\label{lem:uppertriangular_topology}
Let $R$ be a commutative unital topological ring and let $\Lambda$ be a $\bZ$-like preordered set. Then $\mathrm{U}_{\Lambda}(R)$ is a topological group.  Moreover, the map $\theta$ defined in (\ref{eq:product_map1}) is a closed topological embedding.
\end{lem}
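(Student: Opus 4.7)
The plan is to address the two claims in sequence. First, since $\mathrm{U}_{\Lambda}(R)$ carries the subspace topology from $\mathrm{M}_{\Lambda}(R)$ with the product topology, continuity of the group operations can be checked coordinatewise. Continuity of multiplication is immediate from Lemma~\ref{lem:upper_triangular:product}: the $(i,j)$-entry of a product is $\sum_{k\in[i,j]} a_{ik}b_{kj}$, a sum over the finite set $[i,j]$ (finite by $\bZ$-likeness), hence a polynomial in finitely many input entries. For continuity of inversion I will unpack the explicit formula from the proof of Lemma~\ref{lem:upper_triangular}: $(a_{ij})^{-1} = (r_{ij})(d_{ij})$, where $(d_{ij})$ is the $\Lambda$-diagonal matrix whose $[k,k]$-block is $((a_{ij})_{i,j\in[k,k]})^{-1}$, and $(r_{ij}) = \sum_{n\ge 0}(s_{ij})^n$ with $(s_{ij}) = I - (d_{ij})(a_{ij})$. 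For any fixed pair $(i,j)$, only the terms with $n < |[i,j]|$ can contribute, so the resulting expression for the $(i,j)$-entry of $(a_{ij})^{-1}$ depends on finitely many entries $a_{kl}$ with $k,l\in[i,j]$ together with the inversion of finitely many finite matrices, each continuous on $\mathrm{GL}_n(R)$.

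For the statements about $\theta$, continuity is immediate because each coordinate projection of $\theta((a_{ij}))$ is a finite tuple of entries of the input. For injectivity and for the continuity of $\theta^{-1}$ on the image, I will exploit the fact that for any $i,j\in\Lambda$ the $\bZ$-likeness of $\Lambda$ yields a finite strongly convex set $\Lambda'\ni i,j$, so the entry $a_{ij}$ is recoverable from the $(i,j)$-coordinate of the $\Lambda'$-factor of $\theta((a_{ij}))$. In particular, the inverse of $\theta$ on its image, post-composed with any entry projection, factors through a single coordinate projection followed by an entry projection, both continuous.

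It remains to show $\theta(\mathrm{U}_{\Lambda}(R))$ is closed in $\prod_{\Lambda'\in\mc{C}}\mathrm{GL}_{\Lambda'}(R)$. I will describe the image as the intersection of two families of closed conditions on tuples $(M_{\Lambda'})$: \emph{upper triangularity}, $(M_{\Lambda'})_{ij}=0$ whenever $i\not\lesssim j$; and \emph{compatibility}, $(M_{\Lambda''})_{ij}=(M_{\Lambda'})_{ij}$ whenever $\Lambda''\subseteq\Lambda'$ and $i,j\in\Lambda''$. Each is an equality or vanishing of continuous coordinate projections valued in the (Hausdorff) topological ring $R$, so each cuts out a closed subset of the product. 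Conversely, any tuple satisfying both families assembles into an upper triangular $\Lambda$-matrix $A$ by $A_{ij} := (M_{\Lambda'})_{ij}$ for any $\Lambda'\ni i,j$; since its $[k,k]$-blocks are the invertible matrices $M_{[k,k]}$, Lemma~\ref{lem:upper_triangular} places $A$ in $\mathrm{U}_{\Lambda}(R)$ with $\theta(A) = (M_{\Lambda'})$. I expect the main point requiring care to be precisely this identification of the image as the locus defined by these two closed conditions; the remaining continuity bookkeeping, and the continuity of inversion via the finite-support formula, are routine once this structural picture is in hand.
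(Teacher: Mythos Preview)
Your proof is correct, and for the claims about $\theta$ it matches the paper's argument almost exactly (the paper also says the image ``can be specified by a set of equations on the entries'', which is precisely your upper-triangularity and compatibility conditions). The one genuine difference is in how you establish that $\mathrm{U}_{\Lambda}(R)$ is a topological group. The paper handles multiplication and inversion simultaneously by checking continuity of $\pi(a,b)=ab^{-1}$: since $\theta_{\Lambda'}$ is a group homomorphism (Remark~\ref{rem:upper_triangular:product}), one has $\theta_{\Lambda'}\circ\pi = \pi\circ(\theta_{\Lambda'}\times\theta_{\Lambda'})$, reducing everything to the finite-dimensional fact that $\mathrm{GL}_{\Lambda'}(R)$ is a topological group. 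You instead treat multiplication and inversion separately, unpacking the explicit inverse formula $(r_{ij})(d_{ij})$ from Lemma~\ref{lem:upper_triangular} and arguing that each entry of the inverse is a continuous function of finitely many input entries and finitely many finite-block inversions. Both routes ultimately rest on the same finite-dimensional fact (continuity of inversion in $\mathrm{GL}_n(R)$); the paper's is shorter and more conceptual, while yours makes the dependence of each output entry on the input entries completely explicit, which has the minor advantage of showing directly \emph{which} finite set of coordinates controls a given entry of the inverse.
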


\begin{proof}
By definition of the product topology, the map 
$$
\pi(a,b) = ab^{-1} : \mathrm{U}_{\Lambda}(R)\times\mathrm{U}_{\Lambda}(R) \to \mathrm{U}_{\Lambda}(R)
$$ 
is continuous if $\theta_{ij}\circ \pi$ is continuous for all $(i,j)\in \Lambda^2$, where $\theta_{ij}(a) = a_{ij}$. For this, it suffices to show that $\theta_{\Lambda'}\circ \pi$ is continuous for every finite convex subset $\Lambda'$. 

Remark~\ref{rem:upper_triangular:product} implies that $\theta_{\Lambda'}\circ \pi = \pi\circ (\theta_{\Lambda'}\times\theta_{\Lambda'})$, and $\pi\circ (\theta_{\Lambda'}\times\theta_{\Lambda'})$ is continuous if the restriction of $\pi$ to $\theta_{\Lambda'}(\mathrm{U}_{\Lambda}(R))$, which is equal to~$\mathrm{U}_{\Lambda'}(R)$, is continuous. Hence continuity of $\theta_{\Lambda'}\circ \pi$ amounts to $\mathrm{U}_{\Lambda'}(R)$ being a topological group for every finite convex set $\Lambda'$.
Since $\mathrm{GL}_{\Lambda'}(R)$ is a topological group when equipped with the subspace topology for the product topology on $\mathrm{M}_{\Lambda'}(R)$, and since $\mathrm{U}_{\Lambda'}(R)$ is defined by a set of equations and hence is a closed subgroup of $\mathrm{GL}_{\Lambda'}(R)$, it follows that $\mathrm{U}_{\Lambda'}(R)$ is indeed a topological group.

The map $\theta$ is continuous because each homomorphism $\theta_{\Lambda'} : (a_{ij}) \mapsto (a_{ij})_{i,j\in\Lambda'}$ is continuous, and is a homeomorphism onto its range because each of the coordinate maps $\theta_{ij}$ (which determine the product topology on $\mathrm{M}_{\Lambda}(R)$) factors through $\theta_{\Lambda'}$ if $\Lambda'$ contains~$i$ and~$j$. The image is closed because it can be specified by a set of equations on the entries.
\end{proof}

If $R$ is finite and discrete, then $\mathrm{GL}_{\Lambda'}(R)$ is a finite discrete group for each finite convex $\Lambda'$ and we have the following immediate consequence.

\begin{cor}
\label{cor:uppertriangular_topology}
Let $R$ be a finite commutative unital ring equipped with the discrete topology, and let $\Lambda$ be a $\bZ$-like preordered set.  Then $\mathrm{U}_{\Lambda}(R)$ is a profinite group.
\endproof
\end{cor}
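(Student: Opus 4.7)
The plan is to reduce immediately to Lemma~\ref{lem:uppertriangular_topology} and use the standard fact that a closed subgroup of a profinite group is profinite. First I would observe that for each finite convex $\Lambda' \in \mc{C}$, the group $\mathrm{GL}_{\Lambda'}(R)$ sits inside $\mathrm{M}_{\Lambda'}(R) \cong R^{\Lambda'\times\Lambda'}$, which is a finite set since both $R$ and $\Lambda'\times\Lambda'$ are finite. Equipped with the discrete topology, $\mathrm{GL}_{\Lambda'}(R)$ is therefore a finite discrete topological group, and in particular profinite.

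Next I would form the product $P := \prod_{\Lambda' \in \mc{C}} \mathrm{GL}_{\Lambda'}(R)$ with the product topology. Since each factor is compact Hausdorff, Tychonoff's theorem gives that $P$ is compact Hausdorff; moreover $P$ is totally disconnected because it is a product of discrete (hence totally disconnected) spaces. Thus $P$ is a profinite group.

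Finally I would invoke Lemma~\ref{lem:uppertriangular_topology}, which provides a closed topological embedding $\theta : \mathrm{U}_{\Lambda}(R) \to P$. Since $\theta(\mathrm{U}_{\Lambda}(R))$ is a closed subgroup of the profinite group $P$, it is itself profinite, and $\theta$ restricts to a topological isomorphism onto its image. Therefore $\mathrm{U}_{\Lambda}(R)$ is profinite. There is no real obstacle here; all the work has already been done in Lemma~\ref{lem:uppertriangular_topology}, and the corollary is essentially a direct application of ``closed subgroup of a profinite group is profinite'' once one notes that finiteness of $R$ makes every $\mathrm{GL}_{\Lambda'}(R)$ finite.
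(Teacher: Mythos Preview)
Your proposal is correct and follows exactly the argument the paper intends: the sentence preceding the corollary already notes that each $\mathrm{GL}_{\Lambda'}(R)$ is finite discrete, and the corollary is then stated with no further proof (just a QED box), so your write-up simply spells out the implicit application of Lemma~\ref{lem:uppertriangular_topology} together with ``closed subgroup of a profinite group is profinite.''
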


In the case that~$R$ is discrete, the description of $\AU_{\Lambda}(R)$ as the direct limit of profinite groups $\mathrm{U}_{\Lambda+\part}(R)$ given in Equation~\eqref{eq:direct_limit2} may be used to extend the topology on $\mathrm{U}_{\Lambda}(R)$ to $\AU_{\Lambda}(R)$. 
Recall that Definition~\ref{defn:finitary} identifies each $\Lambda'\in\mc{C}$ with the finitary partition $\part{\Lambda'}$ and denote the subset of~$\mc{F}$ consisting of all such partitions by $\part\mc{C}$. The assumption that $\Lambda$ is $\mathbb{Z}$-like then implies that $\part\mc{C}$ is a cofinal subset of $(\mc{F},\leq)$ and Equation~\eqref{eq:direct_limit2} becomes, in the notation of Definition~\ref{defn:finitary},
\begin{equation*}
\AU_{\Lambda}(R) = \lim_{\Lambda'\in\mc{C}} \mathrm{U}_{\Lambda+\Lambda'}(R).
\end{equation*}
The topologies for $\mathrm{U}_{\Lambda+\Lambda'}(R)$ as $\Lambda'$ ranges over~$\mc{C}$ are consistent with one another in the following sense: given $\Lambda_1,\Lambda_2 \in \mc{C}$, the intersection $\mathrm{U}_{\Lambda+{\Lambda_1}}(R) \cap \mathrm{U}_{\Lambda+{\Lambda_2}}(R)$, being determined by a condition on finitely many entries in $\mathrm{U}_{\Lambda+{\Lambda_i}}(R)$, is open in both $\mathrm{U}_{\Lambda+{\Lambda_1}}(R)$ and $\mathrm{U}_{\Lambda+{\Lambda_2}}(R)$ and carries the subspace topology in both.  It follows that there is a unique group topology for $\AU_{\Lambda}(R)$ such that the embedding of $\mathrm{U}_{\Lambda+\Lambda'}$ into $\AU_{\Lambda}(R)$ is continuous and open for all $\Lambda'\in\mc{C}$.  If $R$ is finite, we see that the topology of $\AU_{\Lambda}(R)$ is locally profinite, that is, $\AU_{\Lambda}(R)$ is a totally disconnected, locally compact group.

\begin{rmk}
\label{rem:baseof1}
For each $\Lambda'\in\mc{C}$, the subgroup 
$$
\mathrm{U}^{\Lambda'}_{\Lambda}(R) := \left\{ (a_{ij})\in \mathrm{U}_{\Lambda}(R) \mid a_{ij} = \delta_{ij} \text{ if }i,j\in\Lambda'\right\}
$$
is open in $\AU_{\Lambda}(R)$; in fact, these subgroups form a base of identity neighbourhoods in $\AU_{\Lambda}(R)$ because every finite subset of $\Lambda$ is contained in an element of~$\mc{C}$.
\end{rmk}
\section{Normal subgroups}
\label{sec:normal_subgroups}

We now consider the closed normal subgroups of $\AU_{\Lambda}(R)$. Just as for finite-dimensional matrix groups, there is a natural family of `principal congruence subgroups' arising from the ring structure of $R$.  Specifically, if $I$ is a proper ideal of $R$, then the map $(a_{ij}) \mapsto (a_{ij} + I)$ induces a group homomorphism from $\AU_{\Lambda}(R)$ to $\AU_{\Lambda}(R/I)$.  Provided that $I$ is nonzero, the kernel of this map is a proper nontrivial closed normal subgroup.  The question of how other closed normal subgroups of $\AU_{\Lambda}(R)$ relate to the principal congruence subgroups appears to be difficult. 

Since we are interested in topologically simple groups, we will focus on the case that $R$ is a field. We recall a well-known simplicity result, which may be found in, for example, \cite[\S\S103--105]{Dickson}.
\begin{lem}\label{lem:finite_dim:simple}
Let $F$ be a field and let $n \in \bN$ such that $n \ge 2$; in the case $n=2$, assume $|F| > 3$.  Then every proper normal subgroup of $\mathrm{SL}_n(F)$ is central, and every noncentral normal subgroup of $\mathrm{GL}_n(F)$ contains $\mathrm{SL}_n(F)$.
\end{lem}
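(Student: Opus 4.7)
The plan is to reduce the $\mathrm{GL}_n(F)$ statement to the $\mathrm{SL}_n(F)$ statement and then prove the latter via the classical transvection argument. For the reduction, let $N \trianglelefteq \mathrm{GL}_n(F)$ be noncentral, and consider $M := [N, \mathrm{GL}_n(F)]$. Since $\mathrm{GL}_n(F)/\mathrm{SL}_n(F)$ is abelian (via $\det$), $M$ is contained in $N \cap \mathrm{SL}_n(F)$, and it is normal in $\mathrm{SL}_n(F)$. Taking a noncentral $g \in N$, one finds $h \in \mathrm{GL}_n(F)$ with $[g,h]$ nonscalar (for example, a suitably chosen diagonal matrix), so $M$ is a noncentral normal subgroup of $\mathrm{SL}_n(F)$. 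Applying the first assertion, $M = \mathrm{SL}_n(F) \subseteq N$.

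For the $\mathrm{SL}_n(F)$ assertion, the argument proceeds in three steps, using the \emph{elementary transvections} $t_{ij}(\lambda) := I + \lambda e_{ij}$ with $i \ne j$ and $\lambda \in F$, where $e_{ij}$ is the standard matrix unit. First, $\mathrm{SL}_n(F)$ is generated by the $t_{ij}(\lambda)$, which is just Gaussian elimination, since left (resp.\ right) multiplication by $t_{ij}(\lambda)$ realises the row (resp.\ column) operation $R_i \mapsto R_i + \lambda R_j$. Second, all transvections are conjugate in $\mathrm{SL}_n(F)$: signed permutation matrices of determinant $1$ link different index pairs, and for a fixed pair $(i,j)$ conjugation by a diagonal $\mathrm{diag}(\mu_1, \ldots, \mu_n) \in \mathrm{SL}_n(F)$ sends $t_{ij}(\lambda)$ to $t_{ij}(\mu_i \mu_j^{-1} \lambda)$; the set of achievable ratios $\mu_i \mu_j^{-1}$ covers all of $F^*$ precisely when $n \ge 3$, or when $n = 2$ and $|F| > 3$, which is exactly where the hypothesis enters. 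Third, every noncentral normal subgroup $N \trianglelefteq \mathrm{SL}_n(F)$ contains at least one transvection, and then steps one and two force $N = \mathrm{SL}_n(F)$.

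The main obstacle is the third step: producing a transvection inside $N$ from an arbitrary noncentral $g \in N$. The standard approach is to iteratively form commutators $[g, t_{ij}(\lambda)] \in N$. Because $g$ is not a scalar, there is some index pair $(i,j)$ for which this commutator is nontrivial, and one shows by direct computation that by choosing $(i,j)$ and $\lambda$ well, the support of $[g, t_{ij}(\lambda)]$ can be confined to a strictly smaller block than that of $g$. Iterating (or running an induction on $n$, peeling off a row and column at each stage) eventually produces an element of $N$ whose non-identity support is a single off-diagonal entry, which is a transvection. The low-rank cases excluded by the hypothesis are exactly those where step two fails and the conclusion genuinely breaks, since $\mathrm{PSL}_2(\mathbb{F}_2) \cong S_3$ and $\mathrm{PSL}_2(\mathbb{F}_3) \cong A_4$ are not simple.
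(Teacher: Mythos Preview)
The paper does not prove this lemma; it is quoted as a well-known result with a reference to Dickson. Your outline is the classical transvection argument one finds in such sources, and for $n \ge 3$ it is essentially correct.

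For $n = 2$, however, your Step 2 contains a genuine error. You claim the diagonal ratios $\mu_1\mu_2^{-1}$ with $\mu_1\mu_2 = 1$ cover $F^*$ whenever $|F| > 3$, but in fact $\mu_1\mu_2^{-1} = \mu_1^2$ ranges only over the \emph{squares} in $F^*$. For odd $|F|$ these have index $2$, and one checks directly that $t_{12}(1)$ and $t_{12}(2)$ are not conjugate in $\mathrm{SL}_2(\mathbb{F}_5)$. So transvections in $\mathrm{SL}_2(F)$ generally split into several conjugacy classes, and Step 2 fails as written. Your Step 3 sketch (``confine the support to a strictly smaller block'') is likewise inapplicable when $n = 2$, since there is no smaller block to pass to; the base case requires its own argument.

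The standard repair for $n = 2$ is Iwasawa's criterion: $\mathrm{SL}_2(F)$ acts $2$-transitively on $\mathbb{P}^1(F)$, the point stabiliser has the abelian normal subgroup $\{t_{12}(\lambda) : \lambda \in F\}$ whose conjugates generate, and $\mathrm{SL}_2(F)$ is perfect precisely when $|F| > 3$. That last fact is where the hypothesis genuinely enters; it is not, as you suggest, a matter of transvection conjugacy.
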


The analogous result for $\AU_{\Lambda}(F)$ may now be deduced.
\begin{thm}\label{thm:simple}
Let $F$ be a finite discrete field and let $\Lambda$ be an infinite $\bZ$-like partially ordered set.  Then $\Z_{\Lambda}(F)$ is the unique largest proper closed normal subgroup of $\AU_{\Lambda}(F)$.  In particular, $\AU_{\Lambda}(F)/\Z_{\Lambda}(F)$ is topologically simple.
\end{thm}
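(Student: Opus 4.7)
My plan is to first verify the elementary facts about $\Z_{\Lambda}(F)$, and then reduce the main claim to showing that any closed normal subgroup $N \leq \AU_{\Lambda}(F)$ not contained in $\Z_{\Lambda}(F)$ is dense (whence $N = \AU_{\Lambda}(F)$ by closedness). The subgroup $\Z_{\Lambda}(F)$ is central, and hence normal, by Lemma~\ref{lem:centre}; finite, and hence closed, since the t.d.l.c.\ group $\AU_{\Lambda}(F)$ is Hausdorff; and proper, since $\AU_{\Lambda}(F)$ contains non-scalar elementary matrices.

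Fix $x \in N \setminus \Z_{\Lambda}(F)$ and pick a finite strongly convex $\Lambda_{x} \subset \Lambda$ containing the finite below-diagonal support of $x$ together with a pair of indices witnessing non-scalarity (an off-diagonal nonzero entry, or two unequal diagonal entries); enlarging if necessary, assume $|\Lambda_{x}| \geq 3$. Then $\theta_{\Lambda''}(x)$ is non-scalar in $\mathrm{GL}_{\Lambda''}(F)$ for every finite strongly convex $\Lambda'' \supseteq \Lambda_{x}$, and Lemma~\ref{lem:finite_dim:simple} applies in $\mathrm{GL}_{\Lambda''}(F)$. Since $N$ is normal in $\AU_{\Lambda}(F)$, the intersection $N \cap \mathrm{U}_{\Lambda+\Lambda''}(F)$ is normal in $\mathrm{U}_{\Lambda+\Lambda''}(F)$, and so its image under the surjective homomorphism $\theta_{\Lambda''}$ is a normal subgroup of $\mathrm{GL}_{\Lambda''}(F)$ containing the non-scalar $\theta_{\Lambda''}(x)$. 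Lemma~\ref{lem:finite_dim:simple} then gives
\[
\theta_{\Lambda''}(N \cap \mathrm{U}_{\Lambda+\Lambda''}(F)) \supseteq \mathrm{SL}_{\Lambda''}(F).
\]

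The key step is to upgrade this to the equality $\theta_{\Lambda'}(N \cap \mathrm{U}_{\Lambda+\Lambda'}(F)) = \mathrm{GL}_{\Lambda'}(F)$ for every finite strongly convex $\Lambda' \supseteq \Lambda_{x}$. Given $g \in \mathrm{GL}_{\Lambda'}(F)$, pick a strongly convex $\Lambda'' \supsetneq \Lambda'$ (available since $\Lambda$ is infinite and $\bZ$-like) and let $\tilde{g} \in \mathrm{SL}_{\Lambda''}(F)$ be block-diagonal, with $g$ on the $\Lambda' \times \Lambda'$ block and $\mathrm{diag}((\det g)^{-1}, 1, \ldots, 1)$ on the complementary block. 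By the previous paragraph there is $n \in N \cap \mathrm{U}_{\Lambda+\Lambda''}(F)$ with $\theta_{\Lambda''}(n) = \tilde{g}$. The block-diagonal shape of $\tilde{g}$ confines the below-diagonal-in-$\Lambda$ entries of $n$ inside the $\Lambda''$-block to the sub-block $\Lambda' \times \Lambda'$, while outside the $\Lambda''$-block, $n \in \mathrm{U}_{\Lambda+\Lambda''}(F)$ already forces $n$ to be upper triangular in the $\Lambda$-sense. Hence $n \in \mathrm{U}_{\Lambda+\Lambda'}(F)$ with $\theta_{\Lambda'}(n) = g$.

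Finally, for density, given $y \in \AU_{\Lambda}(F)$ and a basic neighbourhood $y\,\mathrm{U}^{\Lambda'}_{\Lambda}(F)$, enlarge $\Lambda'$ to contain both $\Lambda_{x}$ and the below-diagonal support of $y$ (this only shrinks the neighbourhood, so suffices to test density). Then $y \in \mathrm{U}_{\Lambda+\Lambda'}(F)$ and, by the upgrade, there is $n \in N \cap \mathrm{U}_{\Lambda+\Lambda'}(F)$ with $\theta_{\Lambda'}(n) = \theta_{\Lambda'}(y)$; since the kernel of $\theta_{\Lambda'}$ on $\mathrm{U}_{\Lambda+\Lambda'}(F)$ is precisely $\mathrm{U}^{\Lambda'}_{\Lambda}(F)$, such an $n$ lies in $y\,\mathrm{U}^{\Lambda'}_{\Lambda}(F) \cap N$. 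Thus $N$ is dense and so equal to $\AU_{\Lambda}(F)$; the topological simplicity of $\AU_{\Lambda}(F)/\Z_{\Lambda}(F)$ is then immediate from the correspondence theorem. I expect the main obstacle to be the upgrade step: one must exploit the extra room in $\Lambda'' \setminus \Lambda'$ to absorb the determinant correction, so that the lifted element automatically lands in the smaller $\mathrm{U}_{\Lambda+\Lambda'}(F)$ rather than only in $\mathrm{U}_{\Lambda+\Lambda''}(F)$.
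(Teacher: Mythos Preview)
Your proof is correct and follows essentially the same approach as the paper's: both pick a non-scalar element of $N$, use Lemma~\ref{lem:finite_dim:simple} to obtain $\mathrm{SL}_{\Lambda''}(F)$ inside $\theta_{\Lambda''}(N\cap\mathrm{U}_{\Lambda+\Lambda''}(F))$, then upgrade to $\mathrm{GL}_{\Lambda'}(F)$ by absorbing the determinant in a strictly larger $\Lambda''$, and conclude density. Your treatment of the upgrade step is in fact more explicit than the paper's---you spell out why the block-diagonal shape of $\tilde g$ forces the lift $n$ to lie already in $\mathrm{U}_{\Lambda+\Lambda'}(F)$ rather than merely $\mathrm{U}_{\Lambda+\Lambda''}(F)$---and your density argument via the basic neighbourhoods $y\,\mathrm{U}^{\Lambda'}_{\Lambda}(F)$ and the identification $\ker\theta_{\Lambda'}|_{\mathrm{U}_{\Lambda+\Lambda'}(F)}=\mathrm{U}^{\Lambda'}_{\Lambda}(F)$ is cleaner than the paper's appeal to entrywise convergence.
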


\begin{proof}
It is clear that $\Z_{\Lambda}(F)$ is a proper closed normal subgroup of $\AU_{\Lambda}(F)$.  To show that it is the unique largest one, it suffices to consider a closed normal subgroup $N$ of $\AU_{\Lambda}(F)$ that is not contained in $\Z_{\Lambda}(F)$ and show that $N = \AU_{\Lambda}(F)$.

Given such $N$ and $(a_{ij}) \in N$ which is not a scalar matrix, there is a finite strongly convex $\Lambda'\subset\Lambda$ with $|\Lambda'| \ge 3$ and such that $(a_{ij}) \in \mathrm{U}_{\Lambda+\Lambda'}(F)$ and $(a_{ij})_{i,j \in \Lambda'}$ is not scalar. Suppose that $\Lambda'$ is any such finite strongly convex subset of $\Lambda$. Then, applying the homomorphism $\theta_{\Lambda'}$ given in Remark~\ref{rem:upper_triangular:product}, we see that 
$$
N_{\Lambda'} := \theta_{\Lambda'}(\mathrm{U}_{\Lambda+\Lambda'}(F) \cap N)
$$ 
is noncentral and normal in $\mathrm{GL}_{\Lambda'}(F)$. Hence $\mathrm{SL}_{\Lambda'}(F) \le N_{\Lambda'}$, by Lemma~\ref{lem:finite_dim:simple}. Since $F$ is a field, each matrix $(f_{ij})$ in $\mathrm{GL}_{\Lambda'}(F)$ is a submatrix of a matrix $(g_{ij})$ in $\mathrm{U}_{\Lambda'}(F)$ with $g_{ij} = f_{ij}$ for $i,j\in\Lambda'$ and $g_{ij}=0$ when $i\ne j$ and $i$ or $j$ is not in $\Lambda'$. Moreover, $(g_{ij})$ may be chosen with $(g_{ij})\in \mathrm{SL}_{\Lambda''}(F)$ for some convex $\Lambda''$ strictly containing $\Lambda'$. Since $N_{\Lambda''} \geq \mathrm{SL}_{\Lambda''}(F)$ by the previous argument and $(f_{ij}) = \theta_{\Lambda'}(g_{ij})$, it follows that $N_{\Lambda'}=\mathrm{GL}_{\Lambda'}(F)$.

Consider now an arbitrary element $(c_{ij})$ of $\AU_{\Lambda}(F)$ and suppose that $\Lambda'$ is sufficiently large that $(c_{ij}) \in \mathrm{U}_{\Lambda+\Lambda'}(F)$.  Then $(c_{ij})_{i,j \in \Lambda'}$ is an element of $\mathrm{GL}_{\Lambda'}(F)$ and there is $(b_{ij}) \in N$ such that $(c_{ij})_{i,j \in \Lambda'} = \theta_{\Lambda'}(b_{ij})$.  Since this holds for all sufficiently large $\Lambda'$, $(c_{ij})$ is approximated by elements of $N$ in the topology of entrywise convergence and, since $N$ is closed, it follows that $(c_{ij}) \in N$.  This completes the proof that $N = \AU_{\Lambda}(F)$.

In particular, any nontrivial closed normal subgroup of $\AU_{\Lambda}(F)/\Z_{\Lambda}(F)$ has preimage equal to $\AU_{\Lambda}(F)$ and $\AU_{\Lambda}(F)/\Z_{\Lambda}(F)$ is topologically simple.
\end{proof}

We conclude this section by showing that $\AU_{\Lambda}(F)/\Z_{\Lambda}(F)$ is not simple. To this end, observe that $\AU_{\Lambda}(F)$ acts on the vector space
$$
\mathrm{L}(F) := \left\{ (x_j)\in F^\Lambda \mid \text{ there is } k\in\Lambda\text{ with }x_l=0 \text{ for all }k\lesssim l\right\}
$$
by matrix multiplication because $\sum_{j\in\Lambda} a_{ij}x_j$ is a finite sum for all $(a_{ij})\in \AU_{\Lambda}(F)$ and $(x_j)\in \mathrm{L}(F)$. Define 
\begin{equation}
\label{eq:dense_subgroup}
\mathcal{AU}_{\Lambda}(F) = \left\{ (\delta_{ij} + b_{ij})\in \AU_{\lambda} \mid (x_j)\mapsto \sum_{j\in\Lambda} b_{ij}x_j \text{ has finite rank }\right\}.
\end{equation}
Then $\mathcal{AU}_{\Lambda}(F)$ is closed under multiplication because the sum and product of finite rank operators have finite rank, and is closed under the inverse because every element is equal to the identity on a finite-codimensional subspace of $\mathrm{L}(F)$ and hence so is its inverse. Therefore $\mathcal{AU}_{\Lambda}(F)$ is a subgroup of $\AU_{\Lambda}(F)$. That it is a normal subgroup follows because the rank of an operator does not change under conjugation. 

The subgroup $\mathcal{AU}_{\Lambda}(F)\cap \Delta_\Lambda(F)$ of $\Lambda$-diagonal matrices in $\mathcal{AU}_{\Lambda}(F)$ has infinite index in  $\AU_{\Lambda}(F)\cap \Delta_\Lambda(F)$ and so $\mathcal{AU}_{\Lambda}(F)$ and $\mathcal{AU}_{\Lambda}(F)\Z_{\Lambda}(F)$ are proper subgroups of $\AU_{\Lambda}(F)$. Since $\Z_{\Lambda}(F)$ is contained in the closure of $\mathcal{AU}_{\Lambda}(F)$, Theorem~\ref{thm:simple} implies the following.
\begin{prop}
\label{prop:dense_normal}
The group $\mathcal{AU}_{\Lambda}(F)$ defined in~\eqref{eq:dense_subgroup} is a proper dense normal subgroup of $\AU_{\Lambda}(F)$.
\end{prop}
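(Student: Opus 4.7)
The proposition combines three claims about $\mathcal{AU}_{\Lambda}(F)$: that it is a subgroup of $\AU_{\Lambda}(F)$, that it is normal, and that it is proper and dense. My plan is to defer to the arguments sketched in the paragraphs immediately before the statement for the first three parts. The subgroup and normality claims follow from closure of finite-rank operators under sum, composition, and conjugation by invertibles; and properness is witnessed by the observation that a $\Lambda$-diagonal element of $\mathcal{AU}_{\Lambda}(F)$ must equal the identity on all but finitely many equivalence-class blocks $[k,k]$, while $\AU_{\Lambda}(F) \cap \Delta_{\Lambda}(F)$ contains all nonsingular $\Lambda$-diagonal matrices and $\Lambda$ has infinitely many equivalence classes. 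The only substantive remaining task is density.

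For density I would read the conclusion off Theorem~\ref{thm:simple}. Set $H := \overline{\mathcal{AU}_{\Lambda}(F)}$; since multiplication and inversion are continuous and $\mathcal{AU}_{\Lambda}(F)$ is normal, $H$ is a closed normal subgroup of $\AU_{\Lambda}(F)$. By Theorem~\ref{thm:simple} every proper closed normal subgroup is contained in $\Z_{\Lambda}(F)$, so it suffices to exhibit a single non-scalar element of $\mathcal{AU}_{\Lambda}(F)$: then $H \not\subseteq \Z_{\Lambda}(F)$ forces $H = \AU_{\Lambda}(F)$. For such a witness I would take an elementary transvection. Because $\Lambda$ is infinite and $\bZ$-like, any finite strongly convex $\Lambda' \subset \Lambda$ is nonempty and omits some $k$, which by strong convexity is $\lnsim$-comparable to every element of $\Lambda'$; so there exist $i,j \in \Lambda$ with $i \lnsim j$. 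Fix such a pair and any $t \in F^*$, and take $A := I + t e_{ij}$. Then $A \in \mathrm{U}^*_{\Lambda}(F) \subseteq \AU_{\Lambda}(F)$, the operator $x \mapsto Ax - x = t x_j e_i$ on $\mathrm{L}(F)$ has rank one so $A \in \mathcal{AU}_{\Lambda}(F)$, and $A$ is plainly non-scalar.

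No step above is a genuine obstacle: once Theorem~\ref{thm:simple} is granted, density collapses to this one-line check. The stronger assertion noted in the excerpt, that $\Z_{\Lambda}(F) \subseteq H$, is not needed for the argument, but if one wished to verify it the natural move is to approximate $cI$ by the $\Lambda$-diagonal matrix that equals $cI$ on a large finite convex block $\Lambda'$ and equals $I$ elsewhere: this is a finite-rank perturbation of $I$, hence in $\mathcal{AU}_{\Lambda}(F)$, and it lies in the basic neighbourhood $cI \cdot \mathrm{U}^{\Lambda'}_{\Lambda}(F)$ of $cI$ from Remark~\ref{rem:baseof1}.
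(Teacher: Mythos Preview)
Your proposal is correct and follows essentially the same route as the paper: defer subgroup, normality, and properness to the discussion immediately preceding the proposition, and deduce density from Theorem~\ref{thm:simple} by observing that the closure is a closed normal subgroup not contained in $\Z_{\Lambda}(F)$. Your observation that the inclusion $\Z_{\Lambda}(F)\subseteq\overline{\mathcal{AU}_{\Lambda}(F)}$ is unnecessary is well taken; a single non-scalar transvection already forces the closure to be the whole group.
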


\section{Local structure of infinite matrix groups}
\label{sec:local_uncountable}

`Local structure' of a totally disconnected, locally compact group $G$ refers to properties of compact open subgroups of $G$ which are preserved under commensurability. We investigate next how well this invariant distinguishes totally disconnected, locally compact infinite matrix groups.

\subsection{An uncountable number of non-isomorphic groups $\AU_{\Lambda}(F)$}
\label{sec:uncountable}

For each partition $\qart$ of $\mathbb{Z}$ ($\mathbb{N}$ or $-\mathbb{N}$) into finite intervals, define a $\mathbb{Z}$-like preorder by 
$$
m\lesssim_\qart n \text{ if } m<n \text{ or }m,n\in\mathfrak{p}\text{ for some } \mathfrak{p}\in\qart
$$
and denote $(\mathbb{Z},\lesssim_\qart)$ by $[\qart]$. Furthermore, let 
$$
\sharp(\qart) = \left\{ |\mathfrak{p}|\in \mathbb{N} \mid \mathfrak{p}\in \qart\right\}.
$$

A \defbold{local isomorphism} of topological groups $G$ and $H$ is an open embedding $\phi: U \rightarrow H$, where $U$ is an open neighbourhood of the identity in $G$ and $\phi$ is compatible with the group operations (as far as they are defined on $U$).  We say two topological groups are \defbold{locally isomorphic} if there is a local isomorphism between them.
\begin{prop}
\label{prop:local_structure}
Let $\qart$, $\qart_1$ and $\qart_2$ be partitions of $\mathbb{Z}$ into finite intervals. 
\begin{enumerate}
\item \label{prop:local_structure2}
There is a continuous surjective homomorphism $\mathrm{U}_{[\qart]}(F) \to \mathrm{PGL}_n(F)$ with $n>1$ (or $n>2$ if $|F|=3$) if and only if $n\in \sharp(\qart)$.
\item \label{prop:local_structure3}
If $\AU_{[\qart_1]}(F)$ is locally isomorphic to $\AU_{[\qart_2]}(F)$, then $\sharp(\qart_1)\Delta \sharp(\qart_2)$ is finite. 
\end{enumerate}
\end{prop}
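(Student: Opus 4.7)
The plan is to introduce a local-isomorphism invariant that, motivated by part~\ref{prop:local_structure2}, recovers $\sharp(\qart)$ up to finite symmetric difference. For a profinite group $K$, let $\mathrm{CF}(K)$ denote the set of isomorphism types of nonabelian finite simple groups arising as composition factors of some finite continuous quotient of $K$; $\mathrm{CF}$ is manifestly a topological-isomorphism invariant. Using Remark~\ref{rem:baseof1}, Lemma~\ref{lem:groups} and Lemma~\ref{lem:finite_dim:simple}, I will show that
\[
\mathrm{CF}(\mathrm{U}_{[\qart]}(F)) \;=\; \{\mathrm{PSL}_n(F) : n \in \sharp(\qart),\; n \geq n_0\},
\]
where $n_0 \in \{2,3\}$ is chosen so that $\mathrm{PSL}_n(F)$ is simple for every $n \geq n_0$. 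Indeed every finite continuous quotient of $\mathrm{U}_{[\qart]}(F)$ factors through some $\mathrm{U}_{\Lambda'}(F)$ with $\Lambda'$ a finite convex subset, and convexity in $[\qart]$ forces $\Lambda' = B_1 \sqcup \cdots \sqcup B_r$ to be a union of consecutive blocks of $\qart$; by Lemma~\ref{lem:groups} the group $\mathrm{U}_{\Lambda'}(F)$ is a pro-$p$ group extended by $\prod_j \mathrm{GL}_{|B_j|}(F)$, whose nonabelian composition factors are exactly $\{\mathrm{PSL}_{|B_j|}(F) : |B_j| \geq n_0\}$ by Lemma~\ref{lem:finite_dim:simple}. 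Since the $p$-part of $|\mathrm{PSL}_n(\mathbb{F}_q)|$ is $q^{n(n-1)/2}$, the assignment $n \mapsto \mathrm{PSL}_n(F)$ is injective on $\{n \geq 2\}$ for fixed $F$, identifying $\mathrm{CF}(\mathrm{U}_{[\qart]}(F))$ with $\sharp(\qart) \cap [n_0,\infty)$.

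Next I will show that $\mathrm{CF}$ is stable under passage to a closed subgroup of finite index, in the sense that $\mathrm{CF}(U) \,\Delta\, \mathrm{CF}(G)$ is finite whenever $U \leq G$ is such a subgroup of a profinite group $G$. The normal core $U_0 = \bigcap_{g \in G} gUg^{-1}$ is closed, normal in $G$, and of finite index in both $G$ and $U$. For any open normal subgroup $N$ of $G$ contained in $U_0$ (such $N$ are cofinal among all open normal subgroups of $G$, since any $M$ may be replaced by $M \cap U_0$), Jordan--H\"older applied to the normal subgroup $U_0/N$ of the finite group $G/N$, with quotient $G/U_0$, decomposes the nonabelian composition factors of $G/N$ as those of $U_0/N$ together with those of the fixed finite group $G/U_0$. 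Taking the union over such $N$ yields $\mathrm{CF}(G) = \mathrm{CF}(U_0) \cup \mathrm{CF}(G/U_0)$ with the second set finite, and the analogous argument for $U_0 \triangleleft U$ gives $\mathrm{CF}(U) = \mathrm{CF}(U_0) \cup \mathrm{CF}(U/U_0)$; the claim follows.

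To conclude, any local isomorphism between $\AU_{[\qart_1]}(F)$ and $\AU_{[\qart_2]}(F)$ can be shrunk (using that each $\mathrm{U}_{[\qart_i]}(F)$ is compact open in its $\AU_{[\qart_i]}(F)$) to an isomorphism $\phi: U \to V$ with $U$ of finite index in $\mathrm{U}_{[\qart_1]}(F)$ and $V := \phi(U)$ of finite index in $\mathrm{U}_{[\qart_2]}(F)$. Then $\mathrm{CF}(U) = \mathrm{CF}(V)$, and combining with the previous step,
\[
\mathrm{CF}(\mathrm{U}_{[\qart_1]}(F)) \;\Delta\; \mathrm{CF}(\mathrm{U}_{[\qart_2]}(F))
\]
is finite; translating via the bijection of the first step, $\sharp(\qart_1) \,\Delta\, \sharp(\qart_2)$ is finite (the values below $n_0$ contribute only finitely). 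The main technical obstacle is the finite-index invariance of $\mathrm{CF}$: once the normal-core trick is in place, the question reduces to a Jordan--H\"older count inside a single finite group, but finding an invariant that is both strong enough to pin down $\sharp(\qart)$ and robust enough to survive finite-index passage is the conceptual heart of the argument.
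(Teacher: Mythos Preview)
Your argument for part~\eqref{prop:local_structure3} is correct and genuinely different from the paper's. The paper works directly with surjections onto $\mathrm{PGL}_n(F)$: given a local isomorphism $\psi$, it restricts to an open normal $U\le\mathrm{U}_{[\qart_1]}(F)$ with $\psi(U)\le\mathrm{U}_{[\qart_2]}(F)$, observes that for all but finitely many $\mathfrak{p}\in\qart_2$ the block projection $\mathrm{U}_{[\qart_2]}(F)\to\mathrm{PGL}_{|\mathfrak{p}|}(F)$ is still surjective on $\psi(U)$, pulls this back to $U$, and then extends over $\mathrm{U}^*_{[\qart_1]}(F)U$ (killing the pro-$p$ radical) to force $|\mathfrak{p}|\in\sharp(\qart_1)$ via the product structure of $\Delta_{[\qart_1]}(F)$. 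Your route replaces this extension step by the abstract invariant $\mathrm{CF}(\,\cdot\,)$ together with the normal-core/Jordan--H\"older argument showing $\mathrm{CF}$ is stable up to finite symmetric difference under passage to open finite-index subgroups. This is cleaner and more portable (it would work verbatim for any family of profinite groups whose nonabelian composition factors encode the desired invariant), at the cost of importing Jordan--H\"older; the paper's approach instead leverages part~\eqref{prop:local_structure2} directly and keeps everything inside the concrete block structure. Note also that you do not address part~\eqref{prop:local_structure2} as a separate claim: both directions are implicit in your computation of $\mathrm{CF}(\mathrm{U}_{[\qart]}(F))$ (the forward direction is the block projection; the backward direction is that any continuous surjection onto $\mathrm{PGL}_n(F)$ exhibits $\mathrm{PSL}_n(F)$ as a composition factor of some $\mathrm{U}_{\Lambda'}(F)$, hence $n=|B_j|$ for some block $B_j$), but you should make this explicit since the proposition is stated as two parts.
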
 
\begin{proof}
\eqref{prop:local_structure2}
It is shown in Lemma~\ref{lem:groups} that $\mathrm{U}_{[\qart]}(F) = \mathrm{U}^*_{[\qart]}(F)\rtimes \Delta_{[\qart]}(F)$ and that $\Delta_{[\qart]}(F)\cong \prod_{\mathfrak{p}\in\qart} \mathrm{GL}_{|\mathfrak{p}|}(F)$, and then $\mathrm{PGL}_{|\mathfrak{p}|}(F)$ is the quotient of $\mathrm{GL}_{|\mathfrak{p}|}(F)$ by its centre.  Hence there is a homomorphism $\mathrm{U}_{[\qart]}(F) \to \mathrm{PGL}_n(F)$ for any $n = |\mathfrak{p}|$ with $\mathfrak{p}\in\qart$.

For the converse, consider a continuous homomorphism $\phi : \mathrm{U}_{[\qart]}(F)\to \mathrm{PGL}_n(F)$. Then $\phi( \mathrm{U}^*_{[\qart]}(F) )$ is a normal subgroup of $\mathrm{PGL}_n(F)$, because $\mathrm{U}^*_{[\qart]}(F)$ is normal in $\mathrm{U}_{[\qart]}(F)$, and then since $\mathrm{PGL}_n(F)$ is simple, the only possibilities are that $\phi( \mathrm{U}^*_{[\qart]}(F) )$ is trivial or equal to $\mathrm{PGL}_n(F)$. Since the commutator subgroups $\mathrm{U}^*_{[\qart]}(F)^{(n)}$ in the descending series for $\mathrm{U}^*_{[\qart]}(F)$ converge to the trivial subgroup, whereas $[\mathrm{PGL}_n(F),\mathrm{PGL}_n(F)] = \mathrm{PGL}_n(F)$, it must be that $\phi( \mathrm{U}^*_{[\qart]}(F) )$ is trivial.  Since $\mathrm{U}_{[\qart]}(F) = \mathrm{U}^*_{[\qart]}(F)\rtimes \Delta_{[\qart]}(F)$, it follows that $\phi$ restricts to a surjective homomorphism from $\Delta_{[\qart]}(F)$.  From the structure of $\Delta_{[\qart]}(F)$, and since $\mathrm{PGL}_n(F)$ is only isomorphic to a quotient of $\mathrm{GL}_m(F)$ if $m = n$, we see that the surjective homomorphism from $\Delta_{[\qart]}(F)$ to $\mathrm{PGL}_n(F)$ must restrict to a surjective homomorphism from $\mathrm{GL}_{\mathfrak{p}}(F)$ to $\mathrm{PGL}_n(F)$, for some $\mathfrak{p} \in \qart$ such that $|\mathfrak{p}| = n$.  In particular, $n\in \sharp(\qart)$.

\eqref{prop:local_structure3}
Let $\psi : U \to \AU_{[\qart_2]}(F)$ be a local isomorphism from $\AU_{[\qart_1]}(F)$ to $\AU_{[\qart_2]}(F)$.  Since $\mathrm{U}_{[\qart_1]}(F)$ is an open profinite subgroup of $\AU_{[\qart_1]}(F)$, we can take $U$ to be an open normal subgroup of $\mathrm{U}_{[\qart_1]}(F)$; since $\mathrm{U}_{[\qart_2]}(F)$ is open in $\AU_{[\qart_2]}(F)$, by restricting to a smaller domain we may assume that $\psi(U) \le \mathrm{U}_{[\qart_2]}(F)$.  Hence, by Remark~\ref{rem:baseof1}, there is a finite interval $[m,n]\subset \mathbb{Z}$ such that 
$$
\mathrm{U}^{[m,n]}_{[\qart_2]}(F)\leq \psi(U) \leq \mathrm{U}_{[\qart_2]}(F).
$$
Each $\mathfrak{p}\in\qart_2$ yields a homomorphism $\phi_{\mathfrak{p}} : \mathrm{U}_{[\qart_2]}(F)\to \mathrm{PGL}_{|\mathfrak{p}|}(F)$ whose restriction to $\mathrm{U}^{[m,n]}_{[\qart_2]}(F)$, and hence to $\psi(U)$, is surjective provided that~$\mathfrak{p}$ is disjoint from $[m,n]$.  Supposing that~$\mathfrak{p}$ is disjoint from $[m,n]$ and $|\mathfrak{p}|>2$, we can then extend $\phi_{\mathfrak{p}}\circ\psi$ to a surjective homomorphism
$$
\widetilde{\phi_{\mathfrak{p}}}: \mathrm{U}^*_{[\qart_1]}(F)U \to  \mathrm{PGL}_{|\mathfrak{p}|}(F),
$$
where $\widetilde{\phi_{\mathfrak{p}}}(\mathrm{U}^*_{[\qart_1]}(F))$ is trivial; we then see that $\mathrm{U}^*_{[\qart_1]}(F)U \cap \Delta_{[\qart_1]}(F)$ is an open normal subgroup of $\Delta_{[\qart_1]}(F)$ that surjects onto $\mathrm{PGL}_{|\mathfrak{p}|}(F)$, and hence $|\mathfrak{p}|\in \sharp(\qart_1)$.  Thus $\sharp(\qart_1) \setminus \sharp(\qart_2)$ is finite; similarly, $\sharp(\qart_2) \setminus \sharp(\qart_1)$ is finite, so $\sharp(\qart_1)\Delta \sharp(\qart_2)$ is finite.
\end{proof}

The number of subsets of $\mathbb{N}$ modulo the equivalence relation of finite symmetric difference is uncountable. Proposition~\ref{prop:local_structure} therefore implies 
\begin{cor}
\label{cor:local_structure}
For each prime power~$q$, there are uncountably many local isomorphism classes of topologically simple totally disconnected locally compact groups of the form $\AU_{\Lambda}(\mathbb{F}_q)$ with $(\Lambda,\lesssim)$ a $\mathbb{Z}$-like preorder.
\end{cor}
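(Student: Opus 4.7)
The plan is to apply Proposition~\ref{prop:local_structure}\eqref{prop:local_structure3} to exhibit $2^{\aleph_0}$ distinct local isomorphism classes. The key step is encoding each subset $S \subseteq \{3,4,5,\dots\}$ as a partition $\qart_S$ of $\mathbb{N}$ whose length set $\sharp(\qart_S)$ is precisely $S$, after which the corollary will follow by counting equivalence classes under finite symmetric difference.

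First I would realise each nonempty $S \subseteq \{3,4,5,\dots\}$ as the length set of a partition of $\mathbb{N}$ into finite intervals. Concretely, fix any sequence $(n_k)_{k \ge 1}$ enumerating $S$ (allowing repetition, so that when $S$ is finite the sequence is still infinite and exhausts $\mathbb{N}$), and let $\qart_S$ be the partition of $\mathbb{N}$ into consecutive intervals of lengths $n_1, n_2, n_3, \dots$. By construction $\sharp(\qart_S) = S$. The preordered set $[\qart_S]$ is $\mathbb{Z}$-like by Proposition~\ref{prop:Z-like}\ref{prop:Z-like1}, and Theorem~\ref{thm:simple} then produces the topologically simple t.d.l.c.\ group $\AU_{[\qart_S]}(\mathbb{F}_q)/\Z_{[\qart_S]}(\mathbb{F}_q)$.

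Next, suppose $S, T \subseteq \{3,4,5,\dots\}$ have infinite symmetric difference. If $\AU_{[\qart_S]}(\mathbb{F}_q)$ were locally isomorphic to $\AU_{[\qart_T]}(\mathbb{F}_q)$, then Proposition~\ref{prop:local_structure}\eqref{prop:local_structure3} would force $\sharp(\qart_S) \Delta \sharp(\qart_T) = S \Delta T$ to be finite, contradicting the assumption. Since $\Z_{[\qart_S]}(\mathbb{F}_q) \cong \mathbb{F}_q^*$ is finite and hence discrete, $\AU_{[\qart_S]}(\mathbb{F}_q)$ is locally isomorphic to its simple quotient $\AU_{[\qart_S]}(\mathbb{F}_q)/\Z_{[\qart_S]}(\mathbb{F}_q)$, so non-local-isomorphism transfers to the simple quotients.

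Finally, the equivalence relation of finite symmetric difference on $\mathcal{P}(\{3,4,5,\dots\})$ has all equivalence classes countable, while the power set has cardinality $2^{\aleph_0}$, giving $2^{\aleph_0}$ equivalence classes. Therefore the family $\bigl\{\AU_{[\qart_S]}(\mathbb{F}_q)/\Z_{[\qart_S]}(\mathbb{F}_q) : S \subseteq \{3,4,5,\dots\}\bigr\}$ contains representatives of uncountably many pairwise non-locally-isomorphic topologically simple t.d.l.c.\ groups. There is no real obstacle: once Proposition~\ref{prop:local_structure}\eqref{prop:local_structure3} is in hand, the argument is reduced to the trivial bookkeeping of Step~1 together with a well-known cardinality count.
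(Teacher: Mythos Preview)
Your argument is correct and follows the same route as the paper: both invoke Proposition~\ref{prop:local_structure}\eqref{prop:local_structure3} and then count subsets of the integers modulo finite symmetric difference. The only cosmetic difference is that the paper states Proposition~\ref{prop:local_structure} for partitions of $\mathbb{Z}$ while you work over $\mathbb{N}$; this is immaterial (the paper itself remarks afterwards that the same construction works with $\mathbb{N}$ or $-\mathbb{N}$), and if you prefer to match the proposition literally you can simply place your intervals of lengths $n_1,n_2,\dots$ on the nonnegative integers and partition the negative integers into singletons.
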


The construction just given for $(\mathbb{Z},\lesssim_{\qart})$ also produces uncountably many local isomorphism classes of topologically simple groups $\AU_{\Lambda}(\mathbb{F}_q)$ where $\Lambda$ is of the form $(\mathbb{N},\lesssim_{\qart})$ or ${(-\mathbb{N},\lesssim_{\qart})}$. Regarding isomorphism classes rather than local isomorphism classes, it is likely that $\sharp(\qart)$ is not a sufficiently fine invariant to distinguish all non-isomorphisms between pairs of such groups, because there is no obvious isomorphism between $\AU_{[\qart_1]}(F)$ and $\AU_{[\qart_2]}(F)$ if all intervals appearing in $\qart_1$ and $\qart_2$ have the same lengths but the lengths appear in a different order. For example, let 
\begin{align*}
\qart_1 &= \left\{ [20n,20n+9]\mid n\in\mathbb{Z}\right\}\cup \left\{\{m\}\mid m\in[20n+10,20n+19],\ n\in\mathbb{Z}\right\}\text{ and }\\
\qart_2 &= \left\{ [100n,100n+9]\mid n\in\mathbb{Z}\right\}\cup \left\{\{m\}\mid m\in[100n+10,100n+99],\ n\in\mathbb{Z}\right\}.
\end{align*}
Are $\AU_{[\qart_1]}(F)$ and $\AU_{[\qart_2]}(F)$ isomorphic?

\subsection{Locally normal subgroups}
\label{sec:locnorm}

Suppose that $G$ is a totally disconnected, locally compact group and that $U$ is a compact open subgroup of~$G$. A subgroup $H\leq U$ is \defbold{locally normal} if the normaliser of $H$ is open. This concept is defined in~\cite{CapReidWiI,CapReidWiII} where the lattice of commensurability classes of locally normal subgroups is studied. We recall further concepts from~\cite{CapReidWiI,CapReidWiII} before investigating locally normal subgroups in $\AU_{\Lambda}(\mathbb{F}_q)$.

The \defbold{quasicentre} $\mathrm{QZ}(G)$ of a locally compact group~$G$ is the set of elements whose centraliser is open. Compactly generated topologically simple groups have trivial quasicentre by~\cite[Theorem~A]{CapReidWiII} while, on the other hand, the topologically simple groups constructed in~\cite{WSimp} have dense quasicentre. It turns out that the local structure of the non-compactly generated topologically simple groups constructed here is closer to that of compactly generated groups in this respect.
\begin{prop}
\label{prop:trivialQC}
Let $(\Lambda,\lesssim)$ be a $\mathbb{Z}$-like preorder. Then $\mathrm{QZ}(\AU_{\Lambda}(\mathbb{F}_q)) = \Z_{\Lambda}(\mathbb{F}_q)$.
\end{prop}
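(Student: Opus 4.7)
The inclusion $\Z_{\Lambda}(\bF_q) \subseteq \mathrm{QZ}(\AU_{\Lambda}(\bF_q))$ is immediate from Lemma~\ref{lem:centre}: scalars are central and so have centraliser equal to the whole group. The plan for the reverse inclusion is to take $a = (a_{ij}) \in \mathrm{QZ}(\AU_{\Lambda}(\bF_q))$, use Remark~\ref{rem:baseof1} together with Lemma~\ref{lem:str_conv} to choose a finite strongly convex $\Lambda' \subseteq \Lambda$ that simultaneously makes $a$ lie in $\mathrm{U}_{\Lambda+\Lambda'}(\bF_q)$ and makes $\mathrm{U}^{\Lambda'}_{\Lambda}(\bF_q)$ a subgroup of the centraliser $C(a)$, and then to force $a$ to be scalar by testing commutation with well-chosen elementary matrices.

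The key tool will be the matrix $b_{l,m} := I + E_{lm}$, where $l \lesssim m$ with $l \neq m$, and $E_{lm}$ denotes the matrix with a single $1$ in position $(l,m)$ and zeros elsewhere. One checks directly that $b_{l,m} \in \mathrm{U}_{\Lambda}(\bF_q)$ with inverse $I - E_{lm}$, and that $b_{l,m} \in \mathrm{U}^{\Lambda'}_{\Lambda}(\bF_q)$ whenever $\{l,m\} \not\subseteq \Lambda'$. An entrywise expansion of $ab_{l,m} - b_{l,m}a$ shows that $a$ and $b_{l,m}$ commute if and only if (i) $a_{il} = 0$ for all $i \neq l$, (ii) $a_{mj} = 0$ for all $j \neq m$, and (iii) $a_{ll} = a_{mm}$. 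Since $\Lambda$ is infinite and $\Lambda'$ finite, one may pick $k \in \Lambda \setminus \Lambda'$; strong convexity of $\Lambda'$ forces $k$ to lie either strictly below every element of $\Lambda'$ or strictly above every element of $\Lambda'$. In the first case, $b_{k,m}$ lies in $\mathrm{U}^{\Lambda'}_{\Lambda}(\bF_q) \subseteq C(a)$ for every $m \in \Lambda'$, so as $m$ ranges over $\Lambda'$ the conditions (ii) and (iii) force the $(\Lambda' \times \Lambda')$-submatrix of $a$ to be the scalar matrix $a_{kk} I$; the other case is dual, using $b_{l,k}$.

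To finish, the same argument applies to every larger finite strongly convex $\Lambda'' \supseteq \Lambda'$, since then $\mathrm{U}^{\Lambda''}_{\Lambda}(\bF_q) \subseteq \mathrm{U}^{\Lambda'}_{\Lambda}(\bF_q) \subseteq C(a)$; consistency of the scalar values on nested submatrices, together with $\Lambda$ being the union of such $\Lambda''$, forces $a$ itself to be a global scalar matrix, i.e., $a \in \Z_{\Lambda}(\bF_q)$. The main obstacle is really just bookkeeping---carrying out the commutator calculation carefully and verifying that the elementary $b_{l,m}$ chosen really lies in the right identity-neighbourhood---rather than any deep structural input beyond Remark~\ref{rem:baseof1} and the strong convexity provided by Lemma~\ref{lem:str_conv}.
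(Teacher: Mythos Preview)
Your proof is correct and follows essentially the same strategy as the paper's: fix a finite strongly convex $\Lambda'$ with $a\in\mathrm{U}_{\Lambda+\Lambda'}(\bF_q)$ and $\mathrm{U}^{\Lambda'}_{\Lambda}(\bF_q)\subseteq C(a)$, then commute $a$ against enough elementary-type matrices lying in $\mathrm{U}^{\Lambda'}_{\Lambda}(\bF_q)$ to force every finite strongly convex block of $a$ to be scalar, and pass to the limit. The only difference is presentational: the paper phrases the computation via the projections $\theta_{\Lambda''}$ and block matrices supported on $([k,k]\times[l,l])$-blocks, whereas you work directly with single elementary transvections $I+E_{lm}$ and write out the three commutation conditions explicitly; this makes your argument more self-contained but is not a genuinely different route.
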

\begin{proof}
Consider $(x_{ij})_{i,j\in\Lambda}\in \AU_{\Lambda}(\mathbb{F}_q)$ and suppose that $(x_{ij})_{i,j\in\Lambda}$ centralises the open neighbourhood~$\mathrm{U}^{\Lambda'}_{\Lambda}(\mathbb{F}_q)$ of the identity for some finite convex set $\Lambda'\subset\Lambda$, see Remark~\ref{rem:baseof1} for the definition of these subgroups. Increasing $\Lambda'$ if necessary, it may be supposed that $(x_{ij})_{i,j\in\Lambda}$ belongs to $\mathrm{U}_{\Lambda+\Lambda'}(\mathbb{F}_q)$. Then $\theta_{\Lambda''}((x_{ij})_{i,j\in\Lambda})$ centralises $\theta_{\Lambda''}(\mathrm{U}^{\Lambda'}_{\Lambda}(\mathbb{F}_q))$, see Remark~\ref{rem:upper_triangular:product} for the definition of $\theta_{\Lambda''}$, for every convex $\Lambda''\supset\Lambda'$. Thus $\theta_{\Lambda''}((x_{ij})_{i,j\in\Lambda})\in \mathrm{U}_{\Lambda''}(\mathbb{F}_q)$ and lies in the centre of
$$
\left\{ (a_{ij})_{i,j}\in \mathrm{U}_{\Lambda''}(\mathbb{F}_q) \mid a_{ij}=\delta_{ij}\text{ if }i,j\in\Lambda'\right\},
$$
which is a group of upper triangular $(\Lambda''\times\Lambda'')$-matrices. The elements of $\mathrm{U}_{\Lambda''}(\mathbb{F}_q)$ are block matrices with blocks indexed by the minimal convex sets $[k,k]$, $k\in\Lambda''$, while $\theta_{\Lambda''}((x_{ij})_{i,j\in\Lambda})$ is a block matrix with blocks indexed by $\{\Lambda'\}\sqcup \left\{ [k,k]\mid k \not\in \Lambda'\right\}$. Provided that $k\lnsim l$ and at least one of $k$ and $l$ is not in $\Lambda'$, $\theta_{\Lambda''}(\mathrm{U}^{\Lambda'}_{\Lambda}(\mathbb{F}_q))$ contains all matrices which differ from the identity matrix only in the $([k,k]\times[l,l])$-block. Since $\theta_{\Lambda''}((x_{ij})_{i,j\in\Lambda})$ commutes with all such matrices, choosing $\Lambda''$ sufficiently larger than $\Lambda'$ forces $\theta_{\Lambda''}((x_{ij})_{i,j\in\Lambda})$ to be a scalar matrix. Since this holds for all sufficiently large $\Lambda''$, it follows that $\theta_{\Lambda''}((x_{ij})_{i,j\in\Lambda})$ is scalar.
\end{proof}

\begin{cor}
\label{cor:trivialQC}
$\mathrm{QZ}(\AU_{\Lambda}(\mathbb{F}_q)/\Z_{\Lambda}(\mathbb{F}_q)) = \{1\}$.
\end{cor}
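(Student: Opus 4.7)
The plan is to reduce the corollary to Proposition~\ref{prop:trivialQC} by showing that any preimage of a quasicentral element of the quotient is itself quasicentral in $\AU_{\Lambda}(\mathbb{F}_q)$. The crucial observation is that $\Z_{\Lambda}(\mathbb{F}_q)$ is finite: a nonsingular scalar matrix over $\mathbb{F}_q$ is determined by a single element of $\mathbb{F}_q^*$, so $\Z_{\Lambda}(\mathbb{F}_q) \cong \mathbb{F}_q^*$. Since $\AU_{\Lambda}(\mathbb{F}_q)$ is Hausdorff, any finite subgroup is automatically discrete in the subspace topology, so in particular $\{1\}$ is open in $\Z_{\Lambda}(\mathbb{F}_q)$.

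Let $\pi : \AU_{\Lambda}(\mathbb{F}_q) \to \AU_{\Lambda}(\mathbb{F}_q)/\Z_{\Lambda}(\mathbb{F}_q)$ be the quotient homomorphism, and suppose $\pi(g)$ lies in the quasicentre of the quotient, so that $C(\pi(g))$ is open. Then its preimage
\[
V := \pi^{-1}(C(\pi(g))) = \{ h \in \AU_{\Lambda}(\mathbb{F}_q) : [h,g] \in \Z_{\Lambda}(\mathbb{F}_q) \}
\]
is an open neighbourhood of the identity in $\AU_{\Lambda}(\mathbb{F}_q)$. The commutator map $h \mapsto [h,g]$ restricts to a continuous function $V \to \Z_{\Lambda}(\mathbb{F}_q)$, and pulling back the open set $\{1\}$ shows that $C_{\AU_{\Lambda}(\mathbb{F}_q)}(g)$ is open in $V$, hence open in $\AU_{\Lambda}(\mathbb{F}_q)$.

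Therefore $g \in \mathrm{QZ}(\AU_{\Lambda}(\mathbb{F}_q))$, and Proposition~\ref{prop:trivialQC} then forces $g \in \Z_{\Lambda}(\mathbb{F}_q)$, i.e.\ $\pi(g) = 1$. The argument is essentially formal once one has the discreteness of $\Z_{\Lambda}(\mathbb{F}_q)$; no serious obstacle arises, as the latter relies only on the finiteness of $\mathbb{F}_q$ together with the Hausdorff property of the ambient topology.
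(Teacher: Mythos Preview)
Your proof is correct and follows essentially the same approach as the paper's: both lift a quasicentral element to $\AU_{\Lambda}(\mathbb{F}_q)$, use continuity of the commutator map into the discrete group $\Z_{\Lambda}(\mathbb{F}_q)$ to produce an open centraliser, and then invoke Proposition~\ref{prop:trivialQC}. The only cosmetic differences are that the paper restricts to an open subgroup and notes the commutator map is a homomorphism there, whereas you work with the full preimage and use only continuity; and you make the finiteness of $\Z_{\Lambda}(\mathbb{F}_q)\cong\mathbb{F}_q^*$ explicit to justify discreteness.
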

\begin{proof}
Suppose that $x \Z_{\Lambda}(\mathbb{F}_q)\in \AU_{\Lambda}(\mathbb{F}_q)/\Z_{\Lambda}(\mathbb{F}_q)$ centralises the open subgroup~$U$ and let $V$ be an open subgroup of $\AU_{\Lambda}(\mathbb{F}_q)$ such that $V / \Z_{\Lambda}(\mathbb{F}_q)\leq U$. Then the map $v\mapsto [x,v]$ is a continuous homomorphism $V\to \Z_{\Lambda}(\mathbb{F}_q)$ which has an open kernel because $\Z_{\Lambda}(\mathbb{F}_q)$ is discrete. Hence $x\in \Z_{\Lambda}(\mathbb{F}_q)$, by Proposition~\ref{prop:trivialQC}.
\end{proof}

The notion of quasicentre is relativised to subgroups, $H$, in~\cite{CapReidWiI}: the \defbold{quasicentraliser of $H$ in $G$} is 
$$
\mathbf{QC}_G(H) = \left\{ g\in G \mid g \text{ centralises an open subgroup of }H\right\}.
$$
The role of this concept is that the lattice of locally normal subgroups in~$G$ has a sublattice consisting of the centralisers of locally normal subgroups. This sublattice may be shown to be a Boolean lattice if every locally normal subgroup of~$G$ satisfies the following condition.
\begin{defn}
\label{defn:Cstable}
Let $G$ be a totally disconnected, locally compact groups and $U$ be a compact open subgroup of~$G$. The subgroup~ $H$ of $G$ is said to be \defbold{C-stable} in $G$ if
$$
\mathbf{QC}_G(H) \cap \mathbf{QC}_G(\mathbf{C}_G(H)) \text{ is commensurable with } \{1_G\}.
$$
\end{defn}
It may be shown that this condition is satisfied independently of the choice of compact open subgroup~$U$. The group~$G$ is called \defbold{locally C-stable} if all locally normal subgroups of $G$ are C-stable in $G$.

Theorem~5.3 in~\cite{CapReidWiII} shows every compactly generated topologically simple totally disconnected locally compact group is locally C-stable. On the other hand, all topologically simple groups~$G$ constructed in~\cite{WSimp} are not locally C-stable because, $\mathrm{QZ}(G)$ being dense, compact open subgroups in~$G$ are not C-stable. The groups constructed here also fail to be locally C-stable but the reason is less obvious.

\begin{examp}
\label{examp:NN}
Let $G = \AU_{\mathbb{N}}(\mathbb{F}_q)$. Then the compact open subgroup $\mathrm{U}_{\mathbb{N}}(\mathbb{F}_q)$ consists of all upper triangular matrices over $\mathbb{F}_q$ which have non-zero entries on the diagonal. The subgroup
$$
H = \left\{ (a_{ij})_{i,j\in\mathbb{N}} \mid a_{ii}=1 \text{ and } a_{ij}=0 \text{ if }i>j\text{ or } j>i>1\right\},
$$
that is, matrices whose only off-diagonal non-zero entries are in the first row, is normal in $\mathrm{U}_{\mathbb{N}}(\mathbb{F}_q)$ and hence is locally normal. This subgroup is abelian and so $\mathbf{QC}_G(H)\geq \Z_{\mathbb{N}}(\mathbb{F}_q)H = \mathbf{C}_G(H)$. Hence~$H$ is not C-stable. The same claim holds for the subgroup $H\Z_{\mathbb{N}}(\mathbb{F}_q)/\Z_{\mathbb{N}}(\mathbb{F}_q)$ in the topologically simple group $\AU_{\mathbb{N}}(\mathbb{F}_q)/\Z_{\mathbb{N}}(\mathbb{F}_q)$.

On the other hand, for each $n\in\mathbb{N}$ the subgroup
$$
L_n = \left\{ (a_{ij})_{i,j\in\mathbb{N}}\mid a_{ii}=1 \text{ and } a_{ij}=0 \text{ if } i\ne j\text{ and } i>j-n\right\},
$$
that is, matrices whose only off-diagonal non-zero entries are above the $n^{\mathrm{th}}$ superdiagonal, is normal in $\mathrm{U}_{\mathbb{N}}(\mathbb{F}_q)$ and hence is locally normal. These subgroups satisfy $\mathbf{QC}_G(L_n) = \Z_{\mathbb{N}}(\mathbb{F}_q) = \mathbf{C}_G(L_n)$. Hence $L_n\Z_{\mathbb{N}}(\mathbb{F}_q)/\Z_{\mathbb{N}}(\mathbb{F}_q)$, $n\in\mathbb{N}$, are C-stable subgroups of $\AU_{\mathbb{N}}(\mathbb{F}_q)/\Z_{\mathbb{N}}(\mathbb{F}_q)$.
\end{examp}

\begin{examp}
\label{examp:ZZ}
Let $G = \AU_{\mathbb{Z}}(\mathbb{F}_q)$. Then $\mathrm{U}_{\mathbb{Z}}(\mathbb{F}_q)$ is a compact open subgroup of~$G$. the locally C-stable when index set is $\mathbb{Z}$
For each $k\in\mathbb{Z}$ let 
$$
B_k = \left\{ (a_{ij})\in \mathrm{U}_{\mathbb{Z}}(\mathbb{F}_q) \mid a_{ij} -\delta_{ij} = 0 \text{ unless } i< k \text{ and }j>k\right\}.
$$
Then, for $(a_{ij}),\,(b_{ij})\in B_k$ we have, by \eqref{eq:matrix_mult},
$$
(a_{ij})(b_{ij}) = \bigl(\sum_{l\in\mathbb{Z}} a_{il}b_{lj}\bigr) = (-\delta_{ij} + a_{ij}+ b_{ij})
$$
because the only way in which both $a_{il}$ and $b_{lj}$ can be non-zero is if $l$ is equal to at least one of $i$ and $j$. Hence $B_k$ is an abelian subgroup of $\mathrm{U}_{\mathbb{Z}}(\mathbb{F}_q)$. A similar calculations shows that, unless $i<k$ and $j>k$, if $(a_{ij})\in\mathrm{U}_{\mathbb{Z}}(\mathbb{F}_q)$ and $(b_{ij}) = B_k$, then $(a_{ij})(b_{ij})= (a_{ij}) = (b_{ij})(a_{ij})$. Hence $B_k$ is a normal subgroup of $\mathrm{U}_{\mathbb{Z}}(\mathbb{F}_q)$ and is therefore locally normal. Then $B_k$ is not  is not C-stable, and neither is $B_k\Z_{\mathbb{N}}(\mathbb{F}_q)/\Z_{\mathbb{N}}(\mathbb{F}_q)$ in the topologically simple group $\AU_{\mathbb{N}}(\mathbb{F}_q)/\Z_{\mathbb{N}}(\mathbb{F}_q)$.

The subgroups defined analogously to the subgroups $L_n$ in Example~\ref{examp:NN} are C-stable however.
\end{examp}

\vspace{2cm}

\end{document}